\newcommandx{\change}[2][1=]{\todo[linecolor=blue,backgroundcolor=blue!25,bordercolor=blue,#1]{#2}}
\newcommand{\res}{\mathord{\upharpoonright}} 
\newcommand{\qq}{\mathbb{Q}}
\newcommand{\rr}{\mathbb{R}}
\newcommand{\nn}{\mathbb{N}}
\newcommand{\zz}{\mathbb{Z}}
\renewcommand{\gg}{\mathbb{G}}
\newcommand{\la}{\langle}
\newcommand{\ra}{\rangle}
\newcommand{\sU}{\mathscr{U}}
\newcommand{\sP}{\mathscr{P}}
\newcommand{\SF}{\mathrm{SF}^\zz}
\newcommand{\SQ}{\mathrm{SF}^\qq}
\newcommand{\SFZ}{\mathrm{SF}_{\zz}}
\newcommand{\SFQ}{\mathrm{SF}_{\qq}}
\newcommand{\OSFQ}{\mathrm{OSF}_{\qq}}
\newcommand{\TSFZ}{\mathrm{SF}^*_{\zz}}
\newcommand{\TSFQ}{\mathrm{SF}^*_{\qq}}
\newcommand{\TOSFQ}{\mathrm{OSF}^*_{\qq}}
\newcommand{\WSFZ}{\mathrm{Sf}^*_{\zz}}
\newcommand{\WSFQ}{\mathrm{Sf}^*_{\qq}}
\newcommand{\WOSFQ}{\mathrm{OSf}^*_{\qq}}
\newsavebox\myboxA
\newsavebox\myboxB
\newlength\mylenA
\newcommand*\xbar[2][0.75]{%
    \sbox{\myboxA}{$\m@th#2$}%
    \setbox\myboxB\null
    \ht\myboxB=\ht\myboxA%
    \dp\myboxB=\dp\myboxA%
    \wd\myboxB=#1\wd\myboxA
    \sbox\myboxB{$\m@th\overline{\copy\myboxB}$}
    \setlength\mylenA{\the\wd\myboxA}
    \addtolength\mylenA{-\the\wd\myboxB}%
    \ifdim\wd\myboxB<\wd\myboxA%
       \rlap{\hskip 0.5\mylenA\usebox\myboxB}{\usebox\myboxA}%
    \else
        \hskip -0.5\mylenA\rlap{\usebox\myboxA}{\hskip 0.5\mylenA\usebox\myboxB}%
    \fi}
\newtheorem{thm}{Theorem}[section]
\newtheorem{lem}[thm]{Lemma}
\newtheorem*{thm*}{Theorem}
\newtheorem*{conj*}{Conjecture}
\newtheorem*{prop*}{Proposition}
\newtheorem*{fact*}{Fact}
\newtheorem{prop}[thm]{Proposition}
\newtheorem{cor}[thm]{Corollary}
\newtheorem{rem}[thm]{Remark}
\newtheorem*{eg*}{Example}
\providecommand{\keywords}[1]
{
  \small	
  \textbf{\textit{Keywords---}} #1
}
\begin{document}
\sloppy


\title[ The groups $\mathbb{Z}$ and $\mathbb{Q}$ with predicates for being square-free]{The additive groups of $\mathbb{Z}$ and $\mathbb{Q}$ with predicates for being square-free}
\author{Neer Bhardwaj, Minh Chieu Tran}
\address{Department of Mathematics, University of Illinois at Urbana-
Champaign, Urbana, IL 61801, U.S.A}
\curraddr{}
\email{nbhard4@illinois.edu}
\address{Department of Mathematics, University of Notre Dame, Notre Dame, IN 46556, U.S.A}
\curraddr{}
\email{mtran6@nd.edu}
\subjclass[2020]{Primary 03C65; Secondary 03B25, 03C10, 03C64}
\date{\today}

\begin{abstract}
We consider the four structures $(\mathbb{Z}; \SF)$, $(\mathbb{Z}; <, \SF)$, $(\mathbb{Q}; \SQ)$,  and  $(\mathbb{Q}; <, \SQ)$ where $\mathbb{Z}$ is the additive group of integers, $\SF$ is the set of $a \in \mathbb{Z}$ such that $v_{p}(a) < 2$ for every prime $p$ and corresponding $p$-adic valuation $v_{p}$, $\mathbb{Q}$ and $\SQ$ are defined likewise for rational numbers, and $<$ denotes the natural ordering on each of these domains. We prove that the second structure is model-theoretically wild while the other three structures are model-theoretically tame. Moreover, all these results can be seen as examples where number-theoretic randomness yields model-theoretic consequences.
\end{abstract}

\keywords{Model theory, Decidability, Neostability, Square-free Integers}

\maketitle
\section{Introduction}

\noindent In  \cite{ShelahKaplan}, Kaplan and Shelah showed under the assumption of Dickson's conjecture that if  $\zz$ is the additive group of integers implicitly assumed to contain the element $1$ as a distinguished constant and the map $a \mapsto -a$ as a distinguished function, and if $\text{Pr}$ is the set of $ a \in \zz$  such that either  $ a$ or $-a$ is prime, then the theory of $(\zz; \text{Pr})$  is model complete, decidable, and super-simple of U-rank $1$. From our current point of view, the above result can be seen as an example of a more general phenomenon where we can often capture aspects of randomness inside a structure using first-order logic and deduce in consequence  several model-theoretic properties of that structure. In $(\zz; \text{Pr})$, the conjectural randomness is that of the set of primes with respect to addition. Dickson's conjecture is useful here as it reflects this randomness in a fashion which can be made first-order. The second author's work in \cite{Minh} provides another example with similar themes. 

\medskip \noindent Our viewpoint in particular predicts that there are analogues of Kaplan and Shelah's results with $\textrm{Pr}$ replaced by other random subsets of $\zz$. We confirm the above prediction in this paper without the assumption of any conjecture when $\textrm{Pr}$ is replaced with the set $$\SF = \{ a \in \zz : \text{ for all } p \text{ primes, }  v_p(a) < 2 \}$$ where $v_p$ is the $p$-adic valuation associated to the prime $p$. 
We have that $\zz$ is a structure in the language $L$ of additive groups augmented by  a constant symbol for $1$  and a function symbol for $a \mapsto -a$. Then $(\zz;\SF)$  is a structure in the language $L_{\mathrm{u}}$ extending $L$ by a unary predicate symbol for $\SF$ (as indicated by the additional subscript ``u''). We will introduce a first-order notion of {\it genericity} which captures the partial randomness in the interaction between $\SF$ and the additive structure on $\zz$. Using a similar idea as in \cite{ShelahKaplan},  we obtain:

\begin{thm} \label{structure1}
The theory of $(\zz; \SF)$ is model complete, decidable, supersimple of U-rank $1$, and is $k$-independent for all $k \in \nn^{\geq 1}$.
\end{thm}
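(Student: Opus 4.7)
The plan is to follow the Kaplan--Shelah template for $(\zz; \mathrm{Pr})$ but to replace Dickson's conjecture with an unconditional sieve-theoretic input about $\SF$. First, I would formulate a first-order \emph{genericity} axiom scheme $\Sigma$ in $L_1$ that captures the pseudorandom interaction of the square-free predicate with addition. Roughly, for each finite tuple of linear $L_0$-terms $t_1(\bar{x}), \ldots, t_n(\bar{x})$ whose ``local profile'' (their behaviour modulo $p^2$ for each relevant prime $p$) is not obstructed, and each pattern $\varepsilon \in \{0,1\}^n$ consistent with that profile, $\Sigma$ should assert the existence of $\bar{x}$ with $\SF(t_i(\bar{x}))^{\varepsilon_i}$ holding for all $i$. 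The key unconditional input is that $(\zz; \SF)$ actually satisfies $\Sigma$: this is the square-free analogue of Dickson's conjecture, provable by an inclusion--exclusion sieve since the ``forbidden'' events ``$p^2 \mid t_i(\bar{x})$'' have density $1/p^2$ and sum to a convergent series. This is the reason no conjecture is needed in the square-free case.

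Next, assuming $\Sigma$, I would establish model completeness by a standard back-and-forth over sufficiently saturated models: given a partial $L_1$-isomorphism between subgroups and a new element $a$, its $L_0$-type over the source (a coset of a finitely generated subgroup together with divisibility data) can always be realised on the target by genericity, and additionally any prescribed $\SF$/$\neg\SF$ pattern on the finitely many $L_0$-terms in $a$ over the parameters can be realised by the same scheme. Completeness of $T := \mathrm{Th}(\zz;\SF)$ follows because the $L_0$-theory of $\zz$ is known and determined, together with $\Sigma$, and decidability follows because $\Sigma$ is recursive (the local obstructions are computable from the coefficients of the $t_i$).

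For supersimplicity and $U$-rank $1$, I would introduce the candidate independence relation $A \dnf_C B$ defined by algebraic independence of $A,B$ over $C$ in the underlying group sense (linear disjointness of the generated divisible hulls over $C$), and verify the Kim--Pillay axioms; the independence theorem over a model reduces directly to $\Sigma$ applied to the union of the linear data coming from the two types to be amalgamated. $U$-rank $1$ then follows because any non-algebraic extension of a $1$-type over a model must force the underlying element out of the generated subgroup, yielding a single forking step. Finally, $k$-independence for every $k$ should be witnessed by a formula of the form $\varphi(x;\bar{y}) \equiv \SF(x + f(\bar{y}))$ for a suitable linear $f$: using $\Sigma$, one constructs, for each finite bipartite pattern on $[n]^k$, tuples realising it, which prevents $\SF$ from being $k$-dependent. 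The main obstacle is a clean formulation of $\Sigma$ so that both (a) its verification in $(\zz;\SF)$ is a direct application of a standard square-free sieve, and (b) it suffices for the back-and-forth and for the independence theorem without further hypotheses; secondarily, tracking the interaction between $\SF$ and cosets modulo prime squares — which replaces the Hardy--Littlewood correction factors in the prime case — so that no hidden algebraic dependence sneaks into the genericity assertions.
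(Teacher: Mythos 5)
Your proposal follows essentially the same strategy as the paper: a first-order genericity scheme replacing Dickson's conjecture, verified unconditionally by a square-free sieve; model completeness via a back-and-forth over saturated models; a forking analysis for supersimplicity; and $\SF(x + b_0 + \cdots + b_{k-1})$ as the $k$-independence witness. The one genuine divergence is the route to $U$-rank $1$: you propose building a candidate independence relation and verifying the Kim--Pillay axioms, with the independence theorem delegated to $\Sigma$. The paper instead argues directly: after quantifier elimination in an expanded language it shows that any formula dividing over a set must define a finite set, using a lemma that a conjunction of indiscernible copies of a genericity system is again (essentially) a genericity system, together with a ``local'' $U$-rank-$1$ fact for $p$-adic congruence conditions. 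Both routes are standard; the paper's is more self-contained since it avoids invoking the Kim--Pillay theorem, while yours is more modular if one wants an explicit independence calculus.

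Two technical points in your proposal need to be made precise before the plan goes through. First, you phrase everything in $L_1$, but quantifier elimination fails there; the paper works in an expansion $L_1^*$ with unary predicates $U_{p,l}$ for ``$v_p(\cdot) \geq l$'' and $P_m$ for a rescaled square-freeness condition, checks these are $L_1$-definable in $(\zz; \SF)$ (so nothing new is defined), and proves QE in $L_1^*$. Your back-and-forth must promote the ``divisibility data'' and ``cosets modulo prime squares'' you allude to into the signature, or else the type of the new element cannot be captured quantifier-freely and the step fails. Second, your $\Sigma$ should assert \emph{infinitely many} solutions of each locally unobstructed system, not merely one; this is what guarantees that a consistent one-variable system defines an infinite set in the monster model, which is exactly what your forking analysis needs (a non-algebraic type must not divide). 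Both points are easy to repair once noticed, so the proposal is sound.
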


\noindent The above theorem gives us without assuming any conjecture the first natural example of a simple unstable expansion of $\zz$. From the same notion of {\it genericity}, we deduce entirely different consequences  for the structure $(\zz; <, \SF)$ in the language $L_\mathrm{ou}$ extending $L_\mathrm{u}$ by a binary predicate symbol for the natural ordering $<$ (as indicated by the additional subscript ``o''):

\begin{thm} \label{structure2}
The theory of $(\zz; <, \SF)$ interprets arithmetic.
\end{thm}
\noindent The proof here is an adaption of the strategy used in \cite{BJW} to show that the theory of $(\nn; +, <, \Pr)$ with $\Pr$ the set of primes interprets arithmetic.  The above two theorems are in stark contrast  with one another in view of the fact that $(\zz;<)$ is a minimal proper expansion of $\zz$; indeed, it is proven in \cite{Gabe} that adding any new definable set from $(\zz;<)$ to $\zz$ results in defining $<$. On the other hand, it is shown in \cite{DolichGoodrick} that there is no strong expansion of the theory of Presburger arithmetic, so  Theorem \ref{structure2} is perhaps not entirely unexpected.  
\medskip

\noindent It is also natural to consider the structures $(\qq; \SQ)$ and  $(\qq;<, \SQ)$ where $\qq$ is the additive group of rational numbers, also implicitly assumed to contain  $1$ as a distinguished constant and  $a\mapsto -a$ as a distinguished function,
$$\SQ =\{ a \in \qq :   v_p(a) < 2 \text{ for all } \text{ primes } p  \},$$
and the relation $<$ on $\qq$ is the natural ordering. The reader might wonder why chose the above $\SQ$  instead of $\SF$ or $\text{ASF}^\qq =\{ a \in \qq :   |v_p(a)| < 2 \text{ for all } \text{ primes } p  \}.$ From Lemma \ref{sumoftwosquarefree} in the next section, we get $\SQ+\SQ= \qq$, $\SF+\SF = \zz$, and $\text{ASF}^\qq + \text{ASF}^\qq=\{ a : v_p(a) >-2 \text{ for all } \text{ primes } p\}$. Hence, equipping $\qq$ and $(\qq;<)$ with either $\SF$ or $\text{ASF}^\qq$ will result in structures 
expanding a infinite-index pair of infinite abelian groups with a unary predicate on the smaller group, and therefore, having rather different flavors from $(\zz;\SF)$ and $(\zz;<, \SF)$. 

\medskip \noindent 
Viewing $(\qq; \SQ)$ and $(\qq;<, \SQ)$ in the obvious way as an $L_\mathrm{u}$-structure and an $L_\mathrm{ou}$-structure, the main new technical aspect 
is in showing that  these two structures satisfy suitable notions of {\it genericity} and leveraging on them to prove:
\begin{thm} \label{structure3}
The theory  of $(\qq; \SQ)$ is model complete, decidable, simple but not supersimple, and is $k$-independent for all $k \in \nn^{\geq 1}$.
\end{thm}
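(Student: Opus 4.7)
The plan is to parallel the proof of Theorem \ref{structure1}, with the divisibility of $\qq$ producing the two key differences: failure of super-simplicity and a modified genericity schema.

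The first step is a number-theoretic input: for any finite system of conditions of the form $n_i x + a_i \in \SQ$ and $m_j x + b_j \notin \SQ$ with $n_i, m_j$ nonzero integers and $a_i, b_j \in \qq$, if the system is locally consistent at every prime, then it has infinitely many solutions in $\qq$. This follows by clearing denominators and invoking the Chinese Remainder Theorem together with density of square-free rationals. I would then isolate a first-order \emph{genericity} schema over the theory of divisible torsion-free abelian groups (enriched with $1$ and the negation function) asserting that every such locally consistent system is realized, and let $\TQSF$ denote the resulting $L_1$-theory. A back-and-forth between $\omega$-saturated models shows $\TQSF$ is complete and model complete; decidability then follows from recursive axiomatizability.

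For simplicity I would define an independence relation on small subsets of a monster model by combining $\qq$-linear independence in the divisible group reduct with mutual freedom of the $\SQ$-patterns over the base. Verifying the Kim--Pillay axioms---symmetry, extension, local character, and the independence theorem, the last of which is powered by the genericity schema---establishes simplicity. The same coding strategy as in Theorem \ref{structure1} yields, for any $k \geq 1$, a witness to $k$-independence by realizing an arbitrary $(k+1)$-hypergraph as a pattern of $\SQ$-membership among $\qq$-linear combinations of parameters.

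The main obstacle is proving \emph{failure} of super-simplicity. In contrast to $(\zz; \SF)$, the divisibility of $\qq$ means that any non-realized $1$-type over a model $M$ admits, for each prime $p$ and each $a \in M$, infinitely many refinements of the form $p^k x - a \in \SQ$ or $p^k x - a \notin \SQ$ whose truth values are not determined by finitely many others. From a well-chosen family of such refinements I would construct a $1$-type containing an infinite forking chain, forcing infinite $U$-rank. The delicate point is calibrating the independence relation so that it is coarse enough to retain local character and the independence theorem, yet fine enough that these infinitely many divisibility-scales each genuinely contribute new forking.
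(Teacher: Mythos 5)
Your overall plan parallels the paper's, but there are two genuine gaps. First, your genericity schema and back-and-forth are set up in the bare language $L_1$ with only the predicate $\SQ$, and quantifier elimination (hence a clean back-and-forth) fails there: the paper first expands to $L_1^*$, adding predicates $U_{p,l}$ for the sets $\{a : v_p(a) \geq l\}$ and $P_m$, proves QE for the expanded theory $T_{1,\qq}$, and only afterwards descends to $L_1$ model completeness by showing each $U^\qq_{p,l}$ is both existentially and universally $0$-definable in $(\qq;\SQ)$ (Lemma~\ref{Definabilityofvaluation} and Corollary~\ref{Uni}, the latter requiring a separate number-theoretic lemma). Your ``systems'' of conditions $n_i x + a_i \in \SQ$, $m_j x + b_j \notin \SQ$ cannot express the local $p$-conditions at the heart of the local-to-global principle without the $U_{p,l}$ predicates, so the genericity axioms as you state them are either not first-order or not strong enough to carry the back-and-forth.

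Second, and more seriously, your treatment of non-super-simplicity is on the wrong axis and is also misplaced as ``the main obstacle.'' You propose refining a type by formulas $p^k x - a \in \SQ$ as $k$ varies over powers of a single prime; it is not clear these form a forking chain, and I do not believe the argument closes. The paper instead exploits the infinitely many primes: for each prime $p$ one picks an infinite sequence $(c_{p,i})_i$ lying in pairwise distinct cosets of $U^\gg_{p,0}$, so the row $\{x - c_{p,i} \in U_{p,0} : i\in\nn\}$ is $2$-inconsistent, while any choice of one formula per row is simultaneously satisfiable by a Chinese-remainder argument and genericity. This is an inp-pattern of infinite depth, hence the theory is not strong, hence not super-simple --- a short argument once QE is in hand, by contrast with the actually delicate step which is the simplicity proof itself. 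There the paper argues directly against the tree property using Lemma~\ref{SimplifytoG}, Lemma~\ref{Gconjuction}, local stability of $p$-conditions (Lemma~\ref{localstability}) and genericity; your Kim--Pillay route is a legitimate alternative, but as you note yourself, calibrating a single independence relation to satisfy the independence theorem while still witnessing the infinite inp-pattern is the hard part, and you have not resolved it.
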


\noindent From above, $(\qq; \SQ)$ is ``less tame'' than $(\zz; \SF)$. The reader might therefore expect that $(\qq;<, \SQ)$ is wild. However, this is not the case:
\begin{thm} \label{structure4}
The theory $(\qq; <, \SQ)$ is model complete, decidable, is $\mathrm{NTP}_2$ but is not strong, and is $k$-independent for all $k \in \nn^{\geq 1}$.
\end{thm}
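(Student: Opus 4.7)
The plan is to follow the template used for the previous three theorems: introduce an $L_2$-axiom scheme capturing a notion of \emph{genericity} of $\SQ$ with respect to the ordered additive structure of $\qq$, verify that $(\qq; <, \SQ)$ satisfies it, and then read off the stated model-theoretic consequences. The base reduct $(\qq; +, <, 1, -)$ is the theory of divisible ordered abelian groups, which admits quantifier elimination and is o-minimal; its definable sets are Boolean combinations of intervals with $\qq$-rational endpoints. The appropriate genericity scheme will assert that for every finite parameter tuple $\bar{a}$, every nonempty open interval definable over $\bar{a}$, and every consistent specification of $\SQ$-membership on finitely many rational-linear combinations of a new variable with $\bar{a}$, there is a realization. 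Because $\qq$ is divisible and has no proper finite-index subgroups, the congruence obstructions that made $(\zz; <, \SF)$ interpret arithmetic (Theorem~\ref{structure2}) disappear, and the scheme is first-order expressible and consistent.

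Verification of the scheme for $(\qq; <, \SQ)$ should rest on the density of squarefree integers in arithmetic progressions with squarefree modulus, rescaled by suitable denominators to produce squarefree rationals inside any prescribed rational interval. Model completeness then follows from a back-and-forth argument using genericity to realize quantifier-free types in the target model, and decidability follows from the recursive axiomatization together with completeness of the theory (each finitely generated substructure admits essentially one ``generic'' extension). The $k$-independence statement is verified directly: with the scheme in hand, a concrete formula such as $\SQ(x_1 + \cdots + x_k)$ shatters arbitrary $k$-ary patterns, since squarefree rationals can be placed independently along any finite set of $\qq$-linearly independent directions.

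The main obstacle is the dichotomy $\mathrm{NTP}_2$ but not strong. For $\mathrm{NTP}_2$, I would leverage that the ordered additive reduct is $\mathrm{NIP}$ while $\SQ$ behaves simply in the unordered setting of Theorem~\ref{structure3}, and adapt a preservation argument for generic predicate expansions of $\mathrm{NIP}$ theories to our setting; verifying that our genericity scheme matches the hypotheses of such a preservation theorem is one technical step. Non-strongness, in contrast, requires exhibiting for each $n$ an inp-pattern of depth $n$: I would build it from $n$ rationally independent shifts, placing one $\SQ$-condition on each shift so that rows remain consistent while paths are inconsistent, with genericity supplying the required realizations. The most delicate part will be orchestrating these two requirements simultaneously in the presence of the order, since $<$ constrains the placement of parameters along each row and must not force vertical inconsistencies that would collapse the pattern.
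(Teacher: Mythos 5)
Your overall plan---axiomatize a genericity scheme, verify it for $(\qq; <, \SQ)$ via density of squarefree integers in arithmetic progressions, and obtain model completeness by back-and-forth and decidability from completeness---matches the paper's strategy for the first part of the theorem, and the $k$-independence sketch (a formula along the lines of $\SQ(x + y_1 + \cdots + y_k)$ plus a shattering construction) is also essentially what the paper does, by inheriting the witness from the $(\zz;\SF)$ case through quantifier elimination. The serious issues are with your $\mathrm{NTP}_2$ and non-strongness arguments.

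For $\mathrm{NTP}_2$, ``adapt a preservation argument for generic predicate expansions of $\mathrm{NIP}$ theories'' is vague in exactly the way that matters. The paper does not invoke a preservation theorem; it proves $\mathrm{NTP}_2$ directly by running the $\mathrm{TP}_2$-witnessing array through quantifier elimination, decomposing the resulting quantifier-free formula as $\varphi' \wedge \eta$ where $\eta$ is an order-condition and $\varphi'$ is built from $U$- and $P$-conditions, and then handling the two halves by genuinely different mechanisms: order-conditions are $\mathrm{NIP}$ in the reduct $(\gg;<)$ (Lemma~\ref{NIPoforderformula}), and $p$-conditions are \emph{stable} in $T_{1,\qq}$ (Lemma~\ref{localstability}). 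One then uses $\mathrm{NIP}$ of $\eta$ plus o-minimality of the order reduct to ensure each row's order-conditions cut out an interval, uses stability of the $p$-conditions plus the regularity of the uncountable cardinal to find a single row where every prime's $p$-condition survives, and closes the argument by the genericity axiom (satisfiability of a locally satisfiable $G$-system in every interval). The fact that the $p$-conditions are \emph{stable}, not merely simple, is load-bearing; your sketch doesn't surface it, and an off-the-shelf preservation theorem won't deliver it for you without essentially reproving these lemmas.

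For non-strongness, you have the directions of consistency reversed: an inp-pattern requires each \emph{row} to be $k$-inconsistent and each vertical \emph{path} to be consistent, whereas you say ``rows remain consistent while paths are inconsistent.'' Moreover, the paper's witness is not a collection of $\SQ$-conditions but of valuation conditions: for each prime $p$, a row $\{x - c_{p,j} \in U_{p,0} : j \in \omega\}$ with the $c_{p,j}$ pairwise incongruent modulo $p$ (so the row is $2$-inconsistent), and paths consistent by the Chinese remainder theorem across finitely many primes, giving an inp-pattern of infinite depth. The mechanism that makes this work is the coexistence of infinitely many independent, definable index-$p$ subgroups $U^\qq_{p,0}$, which is a feature of $\qq$ (and absent from $\zz$, where $\zz \subseteq U^\zz_{p,0}$ for all $p$). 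Your ``$n$ rationally independent shifts with $\SQ$-conditions'' idea could perhaps be massaged into this, but as stated it neither has the right shape nor identifies the correct source of the obstruction.
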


\noindent The paper is arranged as follows. In section 2, we define the appropriate notions of {\it genericity} for the structures under consideration. The model completeness and decidability results are proven in section 3 and the combinatorial tameness results are proven in section 4.

\medskip
\subsection*{Notation and conventions} Let $h, k$ and $l$ range over the set of integers and let  $m$, $n$, and $n'$ range over the set of natural numbers (which include zero). We let $p$ range over the set of prime numbers, and denote by $v_p$  the $p$-adic valuation on $\qq$.  Let $x$ be a single variable, $y$ a tuple of variables of unspecified length, $z$ the tuple $(z_1, \ldots, z_n)$ of variables, and $z'$ the tuple $(z'_1, \ldots, z'_{n'})$ of variables.  For an $n$-tuple $a$ of elements from a certain set, we let $a_i$ denote the $i$-th component of $a$ for $i \in \{1, \ldots, n\}$.  Suppose $G$ is an additive abelian group. We equip $G^m$ with a group structure by setting $+$ on $G^m$ to be the coordinate-wise addition. Viewing $G$ and $G^m$ as $\zz$-module, we define $ka$ with $a \in G$ and $kb$ with $b\in G^m$ accordingly. Suppose, $G$ is moreover an $L$-structure with $1_G$ the distinguished constant. We write $k$ for $k1_G$.  For $A \subseteq G$, we let $L(A)$ denote the language extending $L$ by adding constant symbols for elements of $A$ and view $G$ as an $L(A)$ structure in the obvious way.

\section{Genericity of the examples}
\noindent We study the structure $(\zz; \SF)$ indirectly by looking at its definable expansion to a richer language. For given $p$ and $l$, set 
$$U^\zz_{p, l} = \{ a \in \zz : v_p(a) \geq l\}.$$
Let $\sU^\zz = ( U^\zz_{p, l})$. The definition for $l \leq 0$ is not too useful as $U^\zz_{p, l} =\zz$ in this case. However, we still keep this for the sake of uniformity as we treat  $(\qq; \SQ)$ later. For $m>0$, set 
$$P^\zz_m = \{ a \in \zz : v_p(a) < 2 + v_p(m) \text{ for  all } p\}. $$ 
In particular,  $ P^\zz_1 = \SF$. Let $\sP^\zz = (P^\zz_m)_{m>0}$. We have that $(\zz, \sU^\zz, \sP^\zz)$ is a structure in the language $L_\mathrm{u}^*$ extending  $L_\mathrm{u}$ by families of unary predicate symbols for  $\sU^\zz$ and $( P^\zz_m)_{m>1}$. Note that $$U^\zz_{p,l}= \zz \text{ for } l\leq 0,\quad U^\zz_{p,l}= p^l\zz \text{ for } l>0, \text{ and} \quad   P^\zz_m =\bigcup_{d \mid m} d\SF \text{ for } m>0.$$ 
Hence, $U^\zz_{p, l}$ and $P^\zz_m$ are definable in   $( \zz, \SF)$, and so a subset of $\zz$ is definable in $(\zz; \sU^\zz, \sP^\zz)$ if and only if it is definable in $( \zz, \SF)$ .

\medskip \noindent
Let $(G; \sP^G, \sU^G)$ be an $L_\mathrm{u}^*$-structure. Then $\sU^G$ is a family indexed by pairs $(p, l)$, and $\sP^{G}$ is a family indexed by $m$. For $p$, $l$, and $m$, define $U^G_{p, l} \subseteq G$ to be  the member of $\sU^G$ with index $(p, l)$  and $P^G_m \subseteq G$ to be the member of the family $\sP^G$ with index $m$. In particular, we have
$\sU^G = (U^G_{p, l})$  and  $P^G = (P^G_m)_{m>0}$.
Clearly, this generalizes the previous definition for $\zz$.

\medskip \noindent We isolate the basic first-order properties of $(\zz; \sU^\zz, \sP^\zz)$. 
Let $\WSFZ$ be a recursive set of $L_\mathrm{u}^*$-sentences such that an $L_\mathrm{u}^*$-structure $(G; \sU^G, \sP^G)$ is a model of $\WSFZ$ if and only if $(G; \sU^G, \sP^G)$   satisfies the following properties:
\begin{enumerate}
\item[(Z1)] $(G;+,-,0,1)$ is elementarily equivalent to $(\zz;+,-,0,1)$; 
\item[(Z2)] $U^G_{p,l}= G$ for $l\leq 0$,  and $U^G_{p,l}= p^lG$ for $l>0$;
\item[(Z3)] $1$ is in $P^G_1$;
\item[(Z4)] for any given $p$, we have that $pa \in P^G_1$ if and only if $a\in P^G_1$ and $a \notin U^G_{p,1}$;
\item[(Z5)] $P^G_m =\bigcup_{d \mid m} dP^G_1 $ for all $m>0$.
\end{enumerate}
\noindent The fact that we could choose $\WSFZ$ to be recursive follows from the well-known decidability of $\zz$. Clearly, $(\zz; \sU^\zz, \sP^\zz)$ is a  model of $\WSFZ$. Several properties which hold in $(\zz; \sU^\zz, \sP^\zz)$ also hold in an arbitrary model of $\WSFZ$:

\begin{lem} \label{basicpropertiesZ}
Let $(G; \sU^G, \sP^G)$ be a model of $\WSFZ$. Then we have the following:
\begin{enumerate}
\item[$\mathrm{(i)}$] $(G; \sU^G)$ is elementarily equivalent to $(\zz; \sU^\zz)$;
\item[$\mathrm{(ii)}$] for all $k$, $p$, $l$, and $m>0$, we have  that
$$k \in U^G_{p,l} \text{ if and only if } k \in U^\zz_{p,l} \ \text{ and }\  k \in P^G_m \text{ if and only if } k \in P^\zz_m;$$
\item[$\mathrm{(iii)}$] for all $h\neq 0 $, $p$, and $l$, we have that $ha \in U^G_{p,l}$  if and only if $ a \in U^G_{p,l-v_p(h)}; $
\item[$\mathrm{(iv)}$] if $a\in G$ is in $U^G_{p, 2+v_p(m)}$ for some $p$, then $a \notin P^G_m$;
\item[$\mathrm{(v)}$]for all $h\neq 0$ and $m>0$, $ha \in P^G_{m}$ if and only if we have
$$ a \in P^G_{m} \ \text{ and }  \  a \notin U^G_{p,2+ v_p(m) -v_p(h)} \text{ for all } p \text{ which divides } h;$$
\item[$\mathrm{(vi)}$] for all $h>0$ and $m>0$, $a \in P^G_{m}$ if and only if  $ha \in P^G_{mh}$. 
\end{enumerate}
\end{lem}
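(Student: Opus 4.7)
The plan is to verify each of the six items in turn, drawing on axioms (1)--(5) of $T^-_{1,\zz}$ together with the elementary equivalence $G \equiv \zz$ in $L_0$.

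Items (1), (3), and the $\sU^G$-portion of (2) are transfer statements. For (1), axiom (2) exhibits each $U^G_{p,l}$ as the $L_0$-definable set $p^l G$ (or $G$ when $l \leq 0$), so $L_0$-elementary equivalence from axiom (1) automatically lifts to the expansion by the $\sU^G$-predicates. The $\sU^G$-part of (2) is then the special case of testing a fixed integer. For (3), after factoring $h = p^{v_p(h)} h'$ with $\gcd(h', p) = 1$ and observing that $h'$ is a unit modulo $p^{l - v_p(h)}$, the assertion becomes a first-order $L_0$-sentence about $a$ that transfers from $\zz$ to $G$.

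For the items involving $\sP^G$, the key preliminary observation is that axiom (4) forces $v_p(b) \leq 1$ for any $b \in P^G_1$: otherwise $b \in p^2 G$ would let one write $b = pc$ with $c \in pG$, contradicting axiom (4). Item (4) of the lemma then follows: if $a = db$ witnesses $a \in P^G_m$ via axiom (5) (so $d \mid m$, $b \in P^G_1$), then by item (3) we have $v_p(a) = v_p(d) + v_p(b) \leq v_p(m) + 1$, incompatible with $a \in U^G_{p, 2+v_p(m)}$. For item (5), I handle $m = 1$ by induction on $\sum_p v_p(h)$, iterating axiom (4) one prime factor of $h$ at a time; the iteration fails whenever $v_p(h) \geq 2$, matching the vacuity of the right-hand side in that regime. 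The $m > 1$ case follows by combining this with axiom (5). The $\sP^G$-portion of item (2) is then obtained by applying item (5) with $a = 1$ and $h = k$, noting that $1 \in P^G_m$ by axioms (3) and (5), and that $1 \notin U^G_{p, l}$ whenever $l > 0$. Item (6) reduces to checking that the valuation characterisation of $P^G_m$ supplied by the $\sP^G$-part of item (2) is invariant under the substitution $(a, m) \mapsto (ha, mh)$, which is immediate.

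The main obstacle is item (5), where one must simultaneously track the interaction of $\sP^G$ with every prime factor of $h$. The cleanest route is to iterate axiom (4) through the prime factorization of $h$; by item (3) of the lemma, multiplication by a prime $q \neq p$ permutes $U^G_{p, l}$, so distinct primes do not interfere in the valuation bookkeeping.
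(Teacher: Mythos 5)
The paper itself marks this lemma with a bare \qed and supplies no argument, so there is no paper proof to compare against; the question is whether your argument is actually sound. Items (1)--(4) and the $\sP$-part of item (2) are fine, and your induction on $\sum_p v_p(h)$ for item (5) is the natural way to iterate axiom (4).

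The genuine gap is in item (6). You reduce it to ``the valuation characterisation of $P^G_m$ supplied by the $\sP^G$-part of item (2)'' being substitution-invariant, but item (2) only characterises $P^G_m\cap\zz$: it says nothing about an arbitrary $a\in G$. For a general element there is no valuation characterisation of $P^G_m$ available -- item (4) gives only the forward implication $a\in P^G_m\Rightarrow a\notin U^G_{p,2+v_p(m)}$, and the converse fails in a general model of $T^-_{1,\zz}$. What actually works is to deduce (6) from item (5): applying (5) with $h$ and modulus $mh$ (and using $v_p(mh)=v_p(m)+v_p(h)$) shows $ha\in P^G_{mh}$ iff $a\in P^G_{mh}$ and $a\notin U^G_{p,2+v_p(m)}$ for every $p\mid h$; the implication $a\in P^G_m\Rightarrow ha\in P^G_{mh}$ then follows from axiom (5) and item (4), but the reverse direction -- from ``$a\in P^G_{mh}$ plus those non-membership conditions'' back to $a\in P^G_m$ -- requires writing $a=db$ with $d\mid mh$, $b\in P^G_1$ via axiom (5), splitting $d$ as $\gcd(d,m)\cdot(d/\gcd(d,m))$ with $d/\gcd(d,m)\mid h$, and reapplying item (5) to strip off the $h$-part. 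This is a real argument, not an immediate substitution.

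A secondary point worth noting: your base case for the induction in item (5) covers $h=\pm1$, but for $h=-1$ the claim becomes $-a\in P^G_1\Leftrightarrow a\in P^G_1$, and closure of $P^G_1$ under negation is not visibly a consequence of axioms (1)--(5) as stated (axiom (4) links $a$ to $pa$, never to $-a$). This is arguably an imprecision inherited from the paper -- likely axiom (3) was intended to guarantee $\pm1\in P^G_1$, since the stated ``$0\in P^G_1$'' is in fact inconsistent with axiom (4) at $a=0$ -- but your induction should at least note the reduction to $h>0$ and where the symmetry of $P^G_1$ is being used.
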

\begin{proof}
Fix a model $(G; \sU^G, \sP^G)$ of $\WSFZ$.  It follows from (Z2) that the same first-order formula defines both  $U^G_{p,l}$ in $G$ and $U^\zz_{p,l}$ in $\zz$. Then using (Z1), we get $\mathrm{(i)}$. The first assertion of  $\mathrm{(ii)}$ is immediate from $\mathrm{(i)}$. Using this, (Z3), and (Z4), we get the second assertion of  $\mathrm{(ii)}$ for the case $m=1$. For $m \neq 1$,  we reduce to the case $m=1$ using property (Z5). Statement  $\mathrm{(iii)}$ is an immediate consequence of $\mathrm{(i)}$. We only prove below the cases $m = 1$ of $\mathrm{(iv-vi)} $ as the remaining cases of the corresponding statements can be reduced to these using (Z5). Statement $\mathrm{(iv)}$ is immediate for the case $m=1$ using (Z2) and (Z4). The case $m=1$ of $\mathrm{(v)}$ is precisely the statement of (Z4) when $h$ is prime, and then the proof proceeds by induction. For the case $m=1$ of $\mathrm{(vi)}$, ($\rightarrow$) follows from (Z5), and ($\leftarrow$) follows through a combination of Z5, $\mathrm{(v)}$ and induction on the number of prime divisors of $h$. 
\end{proof}

\noindent We next consider the structures $(\qq; \SQ)$ and  $(\qq; <,  \SQ)$. For given $p$, $l$, and $m>0$, in the same fashion as above,  we set 
$$U^\qq_{p, l} = \{ a \in \qq : v_p(a) \geq l\}\quad  \text{ and } \quad \ P^\qq_m = \{ a \in \qq : v_p(a) < 2 + v_p(m) \text{ for  all } p\},$$
 and let
$$\sU^\qq = ( U^\qq_{p, l})\quad  \text{ and } \quad  \sP^\qq = (P^\qq_m)_{m>0}.$$ 
Then $(\qq; \sU^\qq, \sP^\qq)$ is a structure in the language $L_\mathrm{u}^*$. Clearly, every subset of $\qq^n$ definable in $(\qq; \SQ)$ is also definable in $(\qq; \sU^\qq, \sP^\qq)$. A similar statement holds for  $(\qq; <, \SQ)$ and  $(\qq; <, \sU^\qq, \sP^\qq)$.  We will show that the reverse implications are also true.

\medskip \noindent 
The next lemma backs up the discussion on $\SQ$ and $\mathrm{ASF}^\qq$ preceding Theorem~\ref{structure3} in the introduction.
\begin{lem} \label{sumoftwosquarefree}
$\SF+\SF = \zz$, $\SQ+\SQ= \qq$, and $\mathrm{ASF}^\qq + \mathrm{ASF}^\qq=\{ a : v_p(a) >-2 \text{ for all } p\}$.
\end{lem}
\begin{proof}
We first prove that any integer $k$ is a sum of two elements from $\SF$. As $\SF = -\SF$ and the cases where $k=0$ or $k=1$ are immediate, we assume that $k>1$. It follows from \cite{Schrinelmanndensity} that the number of square-free positive integers less than $k$ is at least $\frac{53k}{88}$. Since $\frac{53}{88} > \frac{1}{2}$, this implies $k$ can be written as a sum of two positive square-free integers which gives us $\SF+\SF = \zz$. Using this, the other two equalities follow immediately.
\end{proof}

\begin{lem} \label{Definabilityofvaluation}
For all $p$ and $l$, $U^\qq_{p,l}$ is existentially $0$-definable in $( \qq; \SQ)$.
\end{lem}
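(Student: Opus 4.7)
The plan is to reduce the statement to the existential $\emptyset$-definability of $U^\qq_{p,0}$ for each prime $p$, and then exhibit such a definition by combining Lemma~\ref{sumoftwosquarefree} with a short quantifier-free diagnostic that extracts $p$-valuation information from the predicate $\SQ$.

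I would first carry out the reduction. For $l \geq 0$, $a \in U^\qq_{p,l}$ iff $\exists b.\, p^l b = a \wedge b \in U^\qq_{p,0}$; for $l < 0$, $a \in U^\qq_{p,l}$ iff $p^{-l} a \in U^\qq_{p,0}$. Since $\qq$ is divisible and $p^l$ (respectively $p^{-l}$) is realized by a fixed term in $L_0$, either case transports any existential $\emptyset$-definition of $U^\qq_{p,0}$ into one of $U^\qq_{p,l}$, so it suffices to handle the case $l = 0$.

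The key diagnostic is that, for $c \in \qq$, the quantifier-free formula $\SQ(c) \wedge \neg \SQ(p^2 c)$ holds iff $c \in (\SQ \cap U^\qq_{p,0}) \setminus \{0\}$. Indeed $\SQ(c)$ forces $v_q(c) \leq 1$ for every prime $q$, so $v_q(p^2 c) = v_q(c) \leq 1$ whenever $q \neq p$, and therefore $\neg \SQ(p^2 c)$ amounts precisely to $v_p(p^2 c) \geq 2$, i.e.\ $v_p(c) \geq 0$. Using this, I propose the existential $\emptyset$-formula
\[
\varphi_p(a) := \exists c_1 \exists c_2 \bigl( a = c_1 + c_2 \wedge \SQ(c_1) \wedge \SQ(c_2) \wedge \neg \SQ(p^2 c_1) \wedge \neg \SQ(p^2 c_2) \bigr).
\]
The direction $\varphi_p(a) \Rightarrow a \in U^\qq_{p,0}$ is immediate, since both $c_i$ have $v_p(c_i) \geq 0$ and hence $v_p(a) = v_p(c_1 + c_2) \geq 0$. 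For the converse, given $a \in U^\qq_{p,0}$: if $a = 0$ take $c_1 = 1$ and $c_2 = -1$; otherwise write $a = m/n$ in lowest terms with $p \nmid n$ and $m \neq 0$, apply Lemma~\ref{sumoftwosquarefree} to decompose $m = e_1 + e_2$ with $e_1, e_2 \in \SF$ both nonzero, and set $c_i := e_i/n$. A short valuation calculation gives $v_p(c_i) = v_p(e_i) \in \{0,1\}$ and $v_q(c_i) = v_q(e_i) - v_q(n) \leq 1$ for every $q \neq p$, so the $c_i$ witness $\varphi_p(a)$.

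The only slightly delicate point is ensuring that Lemma~\ref{sumoftwosquarefree} can be invoked with both summands nonzero, since otherwise the negation clauses $\neg \SQ(p^2 c_i)$ would fail (recall $0 \in \SQ$). This is a minor bookkeeping matter: the Schnirelmann-style proof of the lemma produces two positive square-free summands whenever $|m| \geq 2$, and the remaining small cases admit explicit decompositions such as $1 = 2 + (-1)$ and $0 = 1 + (-1)$.
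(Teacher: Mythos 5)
Your proof is correct and follows essentially the same route as the paper: both reduce to the case $l=0$, exploit the diagnostic that $\SQ(c) \wedge \neg\SQ(p^2 c)$ detects $v_p(c) \geq 0$ inside $\SQ$, and then use Lemma~\ref{sumoftwosquarefree} to decompose an arbitrary element of $U^\qq_{p,0}$ as a sum of two such witnesses. Your explicit attention to keeping both summands nonzero (so that the clauses $\neg\SQ(p^2 c_i)$ are not vacuously spoiled) is a small but genuine refinement that the paper's terse version leaves implicit.
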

\begin{proof}
As $U^\qq_{p, l+n} = p^nU^\qq_{p, l}$ for all $l$ and $n$, it suffices to show the statement for  $l=0$. Fix a prime $p$. We have for all $a \in \SQ$ that
$$  v_p(a) \geq 0  \  \text { if and only if } p^2 a \notin \SQ. $$ 
Using Lemma \ref{sumoftwosquarefree}, for all $a \in \qq$,  we have that $v_p(a) \geq 0$ if and only if there are $a_1, a_2 \in \qq$ such that   
$$ \big(a_1\in \SQ \wedge v_p(a_1) \geq 0\big) \wedge \big(a_2\in \SQ \wedge v_p(a_2) \geq 0\big) \ \text{ and  } \ a =a_1+a_2. $$
Hence, the set $U^\qq_{p, 0} = \{a \in \qq : v_p(a) \geq 0 \} $ is existentially definable in $(\qq; \SQ)$. The desired conclusion follows.
\end{proof}

\noindent It is also easy to see that for all $m$, $P_m^\qq = m \SQ$ for all $m>0$,  and so $P_m^\qq$ is existentially $0$-definable in $(\qq; \SQ)$. Combining with Lemma \ref{Definabilityofvaluation}, we get:

\begin{prop}\label{notintroducingnewdefinableset}
Every subset of $\qq^n$ definable in $(\qq; \sU^\qq, \sP^\qq)$ is also definable in $( \qq; \SQ)$. The corresponding statement for $(\qq; <, \sU^\qq, \sP^\qq)$ and $(\qq; <, \SQ)$ holds.
\end{prop}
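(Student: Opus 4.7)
The plan is to leverage what is already in hand: Lemma \ref{Definabilityofvaluation} shows that each $U^\qq_{p,l}$ is existentially $0$-definable in $(\qq;\SQ)$, and the paragraph immediately before the proposition notes that $P^\qq_m = m\SQ$, giving the existential $0$-definition $\exists b\,(b\in \SQ \wedge t = mb)$ for each $P^\qq_m$ applied to a term $t$. Since $L_1^*$ extends $L_1$ only by the unary predicate symbols interpreting the families $\sU^\qq$ and $(\sP^\qq_m)_{m>1}$ (the symbol for $\SQ = P^\qq_1$ is already in $L_1$), every primitive symbol of $L_1^*$ that is not already in $L_1$ names a subset of $\qq$ that is definable in $(\qq;\SQ)$.

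From here the argument is a routine induction on the complexity of $L_1^*$-formulas. At the atomic level, replace every occurrence $U^\qq_{p,l}(t)$ or $P^\qq_m(t)$, for a term $t$, with the $L_1$-formula obtained from the definitions above; all other atomic subformulas are already in the language of $(\qq;\SQ)$. The translation commutes with Boolean connectives and quantifiers in the usual way, so every $L_1^*$-formula $\varphi(y)$ is equivalent, in the structure $(\qq;\sU^\qq,\sP^\qq)$, to a formula in the language of $(\qq;\SQ)$. This yields the first assertion, that every subset of $\qq^n$ definable in $(\qq;\sU^\qq,\sP^\qq)$ is definable in $(\qq;\SQ)$.

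For the ordered version, the translation is literally the same, since the symbol $<$ appears identically in the languages of $(\qq;<,\sU^\qq,\sP^\qq)$ and $(\qq;<,\SQ)$; atomic formulas involving $<$ are left untouched, and atomic formulas involving $U^\qq_{p,l}$ or $P^\qq_m$ are rewritten as before. I do not expect any real obstacle in carrying this out: the genuine content of the proposition has already been absorbed into Lemma \ref{Definabilityofvaluation} and the identity $P^\qq_m = m\SQ$, after which the proposition is essentially a bookkeeping exercise confirming that these two facts suffice to eliminate every extra symbol of $L_1^*$ uniformly inside formulas.
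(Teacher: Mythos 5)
Your proposal is correct and takes essentially the same route as the paper, which simply states the proposition as an immediate consequence of Lemma \ref{Definabilityofvaluation} and the observation that $P^\qq_m = m\SQ$ is existentially $0$-definable, leaving the translation-by-induction unstated. You have merely made explicit the bookkeeping that the paper treats as obvious.
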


\noindent In view of the first part of Proposition \ref{notintroducingnewdefinableset}, we can analyze $(\qq; \SQ)$ via $(\qq; \sU^\qq, \sP^\qq)$  in the same way we analyze $(\zz; \SF)$ via $(\zz; \sU^\zz, \sP^\zz)$. 
Let $\WSFQ$ be a recursive set of $L_\mathrm{u}^*$-sentences such that an $L_\mathrm{u}^*$-structure $(G; \sU^G, \sP^G)$ is a model of $\WSFQ$ if and only if $(G; \sU^G, \sP^G)$  satisfies the following properties:
\begin{enumerate}
\item[(Q1)] $(G;+,-,0,1)$ is elementarily equivalent to $(\qq;+,-,0,1)$; 
\item[(Q2)] for any given $p$, $U_{p,0}^G$ is an $n$-divisible subgroup of $G$ for all  $n$ coprime  with $p$;
\item[(Q3)] $1\in U_{p,0}^G$ and $1 \notin U_{p,1}^G$;
\item[(Q4)] for any given $p$, $p^{-l}U_{p,l}^G = U_{p, 0}^G$ if $l<0$  and $U_{p,l} = p^l U_{p, 0}$ if $l>0$;
\item [(Q5)]$ U_{p, 0}^G \slash U_{p, 1}^G$ is isomorphic as a group to $\zz \slash p\zz$; 
\item[(Q6)] $1 \in P_1^G$;
\item[(Q7)] for any given $p$, we have that $pa \in P_1^G$ if and only if $a\in P_1^G$ and $a \notin U_{p,1}^G$;
\item [(Q8)]$P_m^G = m P_1^G$ for $m>0$;
\end{enumerate}

\noindent The fact that we could choose $\WSFQ$ to be recursive follows from the well-known decidability of $\qq$. Obviously, $(\qq; \sU^\qq, \sP^\qq)$ is a  model of $\WSFQ$. 
Several properties which hold in $(\qq; \sU^\qq, \sP^\qq)$ also hold in an arbitrary model of $\WSFQ$:
 
\begin{lem} \label{basicpropertiesQ}
Let $(G; \sU^G, \sP^G)$ be a model of $\WSFQ$. Then we have the following:
\begin{enumerate}
\item[$\mathrm{(i)}$] For all $p$ and all  $l, l' \in \zz$ with $l\leq l'$, we have $U_{p,l}^G$ is a subgroup of $G$, $U_{p,l'}^G \subseteq U^G_{p,l}$. Further, we can interpret $U^G_{p, l} \slash U^G_{p, l'}$ as an $L$-structure with 1 being $p^l+U^G_{p, l'}$, and
$$U^G_{p, l} \slash U^G_{p, l'} \cong_{L} \zz\slash (p^{l'-l}\zz);$$
\item[$\mathrm{(ii)}$] for all $h$, $k\neq 0$, $p$, $l$, and $m>0$, we have  that
$$\frac{h}{k} \in U^G_{p,l} \text{ if and only if } \frac{h}{k} \in U^\qq_{p,l} \ \text{ and }\  \frac{h}{k} \in P^G_m \text{ if and only if } \frac{h}{k} \in P^\qq_m
$$
where $hk^{-1}$ is the obvious element in $\qq$ and in $G$;
\item[$\mathrm{(iii)}$] the replica of $\mathrm{(iii-vi)}$ of Lemma \ref{basicpropertiesZ} holds.
\end{enumerate}
\end{lem}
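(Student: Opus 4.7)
I would verify the three parts in order, each mirroring the analogous (unwritten) argument for Lemma \ref{basicpropertiesZ} but systematically exploiting the divisibility and torsion-freeness of $G$, which follow from axiom (1). The starting observation is that multiplication by any nonzero integer is an $L_0$-bijection $G \to G$, so the axiom-given $U^G_{p,0}/U^G_{p,1} \cong_{L_0} \zz/p\zz$ transfers via shift by $p^l$ to $U^G_{p,l}/U^G_{p,l+1} \cong_{L_0} \zz/p\zz$ for every $l \in \zz$.

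For part (3), I would handle the replicas of (3)--(6) of Lemma \ref{basicpropertiesZ} in turn. For the $U$-replica $ha \in U^G_{p,l} \Leftrightarrow a \in U^G_{p,l-v_p(h)}$, factor $h = p^e h'$ with $\gcd(h',p)=1$, cancel the $p^e$ by torsion-freeness (using axiom (3)), and then note that $h'$ acts invertibly on each successive quotient $U^G_{p,j-1}/U^G_{p,j} \cong \zz/p\zz$ to reduce the question to $a \in U^G_{p,j}$. The $P$-replicas of (4)--(6) follow by similar bookkeeping from axioms (6), (7) combined with the $U$-replica.

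For part (1), I interpret the intended quotient as $U^G_{p,l}/U^G_{p,l'}$ for $l \leq l'$ (as displayed the subscripts appear to be swapped, since $l \leq l'$ forces $U^G_{p,l'} \subseteq U^G_{p,l}$). The filtration $U^G_{p,l} \supset U^G_{p,l+1} \supset \cdots \supset U^G_{p,l'}$ then has $l'-l$ successive quotients, each $\cong \zz/p\zz$, giving a group of order $p^{l'-l}$. Cyclicity: pick $a \in U^G_{p,l} \setminus U^G_{p,l+1}$ (nonempty by the first step), observe $\la a \ra \cong \zz$ by torsion-freeness, and use part (3) to check $ka \in U^G_{p,l'}$ iff $p^{l'-l} \mid k$; matching orders, $\la a \ra$ projects onto the whole quotient.

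For part (2), I would use the $U$-replica of part (3) to reduce from $h/k$ to the case of integer $h$, and then show $h \in U^G_{p, j} \Leftrightarrow v_p(h) \geq j$ for $h \in \zz$. The $P$-half then follows by induction on the prime factorization of $h$ via axioms (5)--(7) combined with the $U$-statement just obtained. The hard step is $\zz \subseteq U^G_{p, 0}$: one has to bootstrap from $1 \in P^G_1$ (axiom (5)) through axiom (6) to produce enough integer elements of $P^G_1$ to witness axiom (2)'s existential formula for integer inputs, after which the rest is routine $p$-valuation bookkeeping.
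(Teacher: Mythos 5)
The paper states Lemma \ref{basicpropertiesQ} without any proof, merely asserting the properties are ``easily seen,'' so there is no written argument to compare against; I can only assess your sketch on its own terms. Your overall plan is sensible, and you are right that the subscripts in part (1) as printed appear to be swapped (the intended reading is confirmed by the way the lemma is invoked in the proof of Lemma \ref{local condition 1}), and the order (3) $\to$ (1) $\to$ (2) is natural.

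That said, there is a genuine gap in your treatment of the $U$-replica in part (3), and it propagates into (1) and (2). After cancelling $p^{v_p(h)}$ by torsion-freeness, you need: for $h'$ coprime to $p$, $h'a \in U^G_{p,l}$ implies $a \in U^G_{p,l}$. The observation that $h'$ acts invertibly on each successive quotient $U^G_{p,j}/U^G_{p,j+1} \cong \zz/p\zz$ only settles this for an $a$ already known to lie in some $U^G_{p,j}$; it says nothing if $a$ lies outside every $U^G_{p,j}$. In $\qq$ that cannot happen, but in an arbitrary model of $T^-_{1,\qq}$ you must argue $G = \bigcup_l U^G_{p,l}$ --- equivalently, that $U^G_{p,0}$ is $q$-divisible for each prime $q\neq p$ --- and this does not follow from divisibility and torsion-freeness of $G$ alone: it has to be extracted from the axioms governing $P^G_1$ (axioms (2), (6), (7)). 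That extraction is precisely the substance of the step, and your sketch skips it. Likewise in part (2), you flag $\zz\subseteq U^G_{p,0}$ as ``the hard step,'' but the proposed bootstrap is too vague to check. Concretely, using axiom (6) one can show $P^G_1 \cap U^G_{p,2}=\emptyset$ and that $\{a\in P^G_1 : p^2 a\notin P^G_1\}\subseteq U^G_{p,0}$, whence $1\in U^G_{p,0}$ is \emph{equivalent} to $p^2\cdot 1\notin P^G_1$; but a naive iteration of axiom (6) starting from $1\in P^G_1$ does not obviously rule out $p^n\in P^G_1$ for all $n$, so this equivalence still has to be resolved. These two divisibility/bootstrap points are where the actual content of the lemma lies, and the proposal does not carry them out.
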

\begin{proof}
Fix a model $(G; \sU^G, \sP^G)$ of $\WSFQ$. From (Q2) we have that $U_{p,0}^G$ is a subgroup of $G$ for all $p$. It follows from (Q4) that $U^G_{p,l'} \subseteq U^G_{p,l} $ are subgroups of $G$ for all $p$ and $l\leq l'$.  With $U^G_{p, l} \slash U^G_{p, l'}$ being interpreted as an $L$-structure with 1 being $p^l+U^G_{p, l'}$, we get an $L$-embedding of $ \zz\slash (p^{l'-l}\zz)$ into $U^G_{p, l} \slash U^G_{p, l'}$ using (Q3) and (Q4). Further, we see that $|U^G_{p, l} \slash U^G_{p, l'}|=p^{(l'-l)}$ using (Q2)-(Q5) and induction on $l'-l$ together with the third isomorphism theorem; and so the aforementioned embedding must be an isomorphism, finishing the proof for $\mathrm{(i)}$. The first assertion of $\mathrm{(ii)}$ follows easily from (Q2)-Q(4). The second assertion for the case $m=1$ follows from the first assertion, (Q6), and (Q7). Finally, the case with $m\not=1$ follows from the case $m=1$ using (Q8). The proof for the replica of $\mathrm{(iii)}$ from Lemma \ref{basicpropertiesZ}  is a consequence of $\mathrm{(i)}$ and (Q4). The proofs for replicas of $\mathrm{(iv-vi)}$ from Lemma \ref{basicpropertiesZ} are similar to the proofs for $\mathrm{(iv-vi)}$ of Lemma \ref{basicpropertiesZ}.
\end{proof}

\noindent  As the reader may expect by now, we will study $(\qq; <, \SQ)$ via $(\qq; <, \sU^\qq, \sP^\qq)$. Let $L_\mathrm{ou}^*$ be $L_\mathrm{ou} \cup L_\mathrm{u}^*$.  Then $(\qq; <, \sU^\qq, \sP^\qq)$ can be construed as an  $L_\mathrm{ou}^*$-structure in the obvious way. Let $\WOSFQ$ be a recursive set of $L_\mathrm{ou}^*$-sentences such that an $L_\mathrm{ou}^*$-structure $(G; \sU^G, \sP^G)$ is a model of $\WOSFQ$ if and only if $(G; \sU^G, \sP^G)$   satisfies the following properties:
\begin{enumerate}
\item $(G; <)$ is elementarily equivalent to $(\qq;<)$;
\item $(G; \sU^G, \sP^G)$ is a model of $\WSFQ$.
\end{enumerate}
As $\text{Th}(\qq;<)$ is decidable, we could choose $\WOSFQ$ to be recursive.

\medskip\noindent Returning to the theory $\WSFZ$, we see that it does not fully capture all the first-order properties of $(\zz, \sU^\zz, \sP^\zz)$. For instance, we will show later in Corollary \ref{forinstance} that for all $c \in \zz$, there is $a \in \zz$ such that 
$$ a+c \in \SF\  \text{ and }\  a+c+1 \in \SF, $$
while the interested reader can construct models of $\WSFZ$ where the corresponding statement is not true. Likewise, the theories $\WSFQ$ and $\WOSFQ$ do not fully capture all the first-order properties of $(\qq; \sU^\qq, \sP^\qq)$ and $(\qq; <, \sU^\qq, \sP^\qq)$.

\medskip \noindent To give a precise formulation of the missing first-order properties of $(\zz, \sU^\zz, \sP^\zz)$, $(\qq; \sU^\qq, \sP^\qq)$, and $(\qq; < \sU^\qq, \sP^\qq)$, we need more terminologies. Let $t(z)$ be an $L_\mathrm{u}^*$-term (or equivalently an $L_\mathrm{ou}^*$-term) with variables in $z$.
An $L_\mathrm{u}^*$-formula (or an $L_\mathrm{ou}^*$-formula) which is a  boolean combination of formulas having the form $t(z) =0$ where we allow $t$ to vary is called an {\bf equational condition}. Similarly, an $L_\mathrm{ou}^*$-formula which is a boolean combination of formulas  having the form  $t(z) < 0$ where $t$ is allowed to vary is called an {\bf order-condition}. For any given $p$, $l$ define $t(z) \in U_{p,l}$ to be the obvious formula in $L_\mathrm{u}^*(z)$ which defines in an arbitrary $L_\mathrm{u}^*$-structure $(G; \sU^G, \sP^G)$ the set
$$ \{ c \in G^n : t^G(c) \in U^G_{p,l} \}.  $$
Define  the quantifier-free formulas $t(z) \notin U_{p,l}$, $t(z) \in P_m$,  and $t(z) \notin P_m$ in  $L_\mathrm{u}^*(z)$ for $p$, $l$, and for $m>0$ likewise.
For each prime $p$, an $L_\mathrm{u}^*$-formula (or an $L_\mathrm{ou}^*$-formula) which is a  boolean combination of formulas of the form $t(z) \notin U_{p,l}$ where $t$ and $l$ are allowed to vary is called a {\bf $p$-condition}. We call a $p$-condition as in the previous statement {\bf trivial} if the boolean combination is  the empty conjunction.

\medskip \noindent 
A {\bf parameter choice} of variable type $(x, z, z')$  is a triple $(k,m,\Theta)$ such that $k$ is in $\zz\setminus\{0 \}$, $m$ is in $\nn^{\geq 1}$, and  $\Theta = \big( \theta_p(x,z,z')\big)$ where $\theta_p(x, z, z')$ is a $p$-condition for each prime $p$ and is trivial for all but finitely many $p$.
We say that an $L_\mathrm{u}^*$-formula  $\psi(x, z, z')$ is {\bf special} if it has the form
$$    \bigwedge_{p} \theta_p(x, z, z') \wedge \bigwedge_{i=1}^n (kx+z_i\in P_{m}) \wedge \bigwedge_{i'=1}^{n'} (kx+z'_i \notin P_{m})   $$
where $k, m$ and $\theta_p(x, z, z')$ are taken from a parameter choice of variable type $(x, z, z')$. Every special formula  corresponds to a unique parameter choice and vice versa. Special formulas are   special enough that we have a ``local to global'' phenomenon in the structures of interest  but general enough to represent quantifier free formulas. We will explain the former point in the remaining part of the section and make the latter point precise with Theorem \ref{SimplifytoG}.

\medskip \noindent Let $\psi(x, z, z')$ be a special formula with parameter choice  $(k,m,\Theta)$ and $\theta_p(x, z, z')$ is the $p$-condition in $\Theta$ for each $p$. We define the {\bf associated equational condition} of  $\varphi(x, z, z' )$  to be the formula  
$$ \bigwedge_{i =1}^n  \bigwedge_{i'=1}^{n'} (z_i\neq z'_{i'})$$ 
and the {\bf associated $p$-condition} of  $\varphi(x, z, z' )$ to be the formula
$$ \theta_p(x, z , z') \wedge \bigwedge_{i=1}^n (kx+z_i\notin U_{p, 2+v_p(m)}). $$
It is easy to see that modulo $\WSFZ$ or $\WSFQ$, an arbitrary special formula implies its associated equational condition and its associated $p$-condition for any prime $p$.

\medskip \noindent  Suppose $(G; \sU^G, \sP^G)$ and $(H; \sU^H, \sP^H)$ are $L_\mathrm{u}^*$-structures such that the former is an $L_\mathrm{u}^*$-substructure of the latter. Let $\psi(x, z, z')$ be a special formula, $\psi_{=}(z,z')$  the associated equational condition, and $\psi_{p}(x,z,z')$ the associated $p$-condition for any given prime $p$.  For $c \in G^n$ and $c' \in G^{n'}$, we call the quantifier-free $L_\mathrm{u}^*(G)$-formula $\psi(x, c, c')$  a {\bf $G$-system}.  An element $a\in H$ such that $\psi(a, c, c')$ holds is called a {\bf solution} of $\psi(x, c, c')$ in $H$.
We say that $\psi(x, c, c')$ is {\bf satisfiable} in $H$ if it has a solution in $H$ and {\bf infinitely satisfiable} in $H$ if it has infinitely many solutions in $H$. 
 We say that $\psi(x, c, c')$ is {\bf nontrivial} if $ \psi_=(c, c')$ holds or more explicitly if $c$ and $c'$ have no common components.
For a given $p$, we say that $\psi(x, c, c')$ is {\bf $p$-satisfiable} in $H$ if there is $a_p\in H$  such that  $\psi_p(a_p, c, c')$ holds. A $G$-system  is {\bf locally satisfiable} in $H$ if it is  $p$-satisfiable in $H$ for all $p$.

\medskip \noindent  Suppose $(G; <, \sU^G, \sP^G)$ and $(H; <, \sU^H, \sP^H)$ are $L_\mathrm{ou}^*$-structures such that the former is an $L_\mathrm{ou}^*$-substructure of the latter.  All the definitions in the previous paragraph have obvious adaptations to this new setting as $(G; \sU^G, \sP^G)$ and  $(H;\sU^H, \sP^H)$ are $L_\mathrm{u}^*$-structures.
For $b$ and $b'$ in $H$ such that $b<b'$, define 
$$(b, b')^H = \{ a \in H: b<a<b'\}.$$ 
A $G$-system $\psi(x, c, c')$ is {\bf satisfiable in every $H$-interval} if it has a solution in the interval $(b, b')^H$ for all $b$ and $b'$ in $H$ such that $b<b'$. The following observation is immediate:

\begin{lem}\label{GlobaltoLocal}
Suppose $(G; \sU^G, \sP^G)$ is a model of either $\WSFZ$ or $\WSFQ$. Then every $G$-system which is satisfiable in $G$ is nontrivial and locally satisfiable in $G$. 
\end{lem}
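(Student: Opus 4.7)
The plan is to verify that the exact element witnessing the $G$-system already witnesses each of its associated $p$-conditions, so no further work is needed beyond carefully unpacking the definitions.

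Let $\psi(x,c,c')$ be a $G$-system, with corresponding parameter choice $(k,m,\Theta)$ and $\Theta=(\theta_p)$. Suppose $a \in G$ satisfies $\psi(a,c,c')$. We will show that this same $a$ satisfies $\psi_p(a,c,c')$ for every prime $p$, which immediately yields $p$-satisfiability for all $p$, hence local satisfiability.

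Fix a prime $p$. The first conjunct of $\psi_p$ is $\theta_p(x,c,c')$, which is already a conjunct of $\psi$, so $\theta_p(a,c,c')$ holds by hypothesis. For the remaining conjuncts, recall that $\psi$ asserts $kx+c_i \in P_m$ for each $i \in \{1,\ldots,n\}$; hence $ka + c_i \in P_m^G$ for each such $i$. Now I would invoke property (4) of Lemma \ref{basicpropertiesZ} in the $\zz$-case, and the corresponding replica from Lemma \ref{basicpropertiesQ}(3) in the $\qq$-case. This property says that any element of $U^G_{p, 2+v_p(m)}$ lies outside $P_m^G$; taking the contrapositive, membership in $P_m^G$ forces exclusion from $U^G_{p, 2+v_p(m)}$. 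Applied to $ka+c_i$, this gives $ka + c_i \notin U^G_{p, 2+v_p(m)}$ for each $i \in \{1,\ldots,n\}$.

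Combining these two observations, $a$ satisfies every conjunct of
\[ \psi_p(x,c,c') \;=\; \theta_p(x,c,c') \wedge \bigwedge_{i=1}^n (kx+c_i \notin U_{p,2+v_p(m)}), \]
so $\psi_p(a,c,c')$ holds. Since $p$ was arbitrary, $\psi(x,c,c')$ is $p$-satisfiable in $G$ for every $p$, hence locally satisfiable in $G$. There is no genuine obstacle here; the content of the statement is really just the observation that the associated $p$-conditions were defined so as to be logical consequences of $\psi$ in any model of the respective base theory, an assertion the text already flags in the paragraph introducing $\psi_p$. The only mild care required is noting that the relevant consequence relies on property (4) of the basic-properties lemma, which holds in models of both $T^-_{1,\zz}$ and $T^-_{1,\qq}$.
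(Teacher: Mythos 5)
Your proof is correct and takes exactly the approach the paper intends: the paper labels this lemma an immediate observation, resting on its earlier remark that $\psi_p$ is a consequence of $\psi$, and your write-up simply makes that implicit reasoning explicit via Lemma~\ref{basicpropertiesZ}(4) and its $\qq$-replica in Lemma~\ref{basicpropertiesQ}(3). The only substantive point you add is that the implication $kx+c_i\in P_m \Rightarrow kx+c_i\notin U_{p,2+v_p(m)}$ is a consequence modulo $T^-_{1,\zz}$ or $T^-_{1,\qq}$ rather than a pure logical consequence, which is a fair clarification of the paper's slightly loose phrasing.
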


\noindent It turns out that the converse and more are also true for the structures of interest. We say that a model $(G; \sU^G, \sP^G)$ of either $\WSFZ$ or  $\WSFQ$ is {\bf generic} if every nontrivial locally satisfiable $G$-system is infinitely satisfiable in $G$. An $\WOSFQ$ model $(G;<, \sU^G, \sP^G)$ is {\bf generic} if every nontrivial locally satisfiable $G$-system is satisfiable in every $G$-interval. We will later show that $( \zz; \sU^\zz, \sP^\zz)$,  $(\qq; \sU^\qq, \sP^\qq)$, and $(\qq; <, \sU^\qq, \sP^\qq)$ are generic.

\medskip\noindent Before that we will show that the above notions of {\it genericity} are first-order. Let $\psi(x, z, z')$ be the special formula corresponding to a parameter choice $(k,m,\Theta)$ with $\Theta = \big( \theta_p(x,z,z')\big)$. A {\bf boundary} of $\psi(x,z,z')$ is a number $B \in  \nn^{>0}$ such that $B>\max\{|k|,n\}$ and $\theta_p(x,z,z')$ is trivial for all $p>B$.
\begin{lem} \label{local condition 1}
Let $\psi (x, z, z')$ be a special formula, $B$ a boundary of $\psi(x,z,z')$, and $(G; \sU^G, \sP^G)$ a model of either $\WSFZ$ or $\WSFQ$. Then every $G$-system $\psi(x, c, c')$ is $p$-satisfiable for $p>B$.
\end{lem}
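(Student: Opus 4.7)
The plan is to reduce $p$-satisfiability, for $p > B$, to a counting argument inside the abelian quotient $\bar G := G/U_{p, N}^G$, where $N := 2 + v_p(m)$. First I would unpack: since $B > \max\{|k|, n\}$ and $\theta_p$ is trivial for every $p > B$, fixing any such $p$ reduces $\psi_p(x, c, c')$ to the conjunction
$$\bigwedge_{i=1}^{n} \bigl( kx + c_i \notin U_{p, N} \bigr),$$
while $p > |k|$ forces $v_p(k) = 0$, and $p > B > n$.

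Next I would recast the goal in $\bar G$: it suffices to produce $\bar a \in \bar G$ with $k \bar a \neq -\bar c_i$ for each $i = 1, \ldots, n$, where $\bar{\phantom{x}}$ denotes reduction modulo $U_{p, N}^G$. Using Lemma \ref{basicpropertiesZ}(3) in the $\zz$-case, or its $\qq$-analogue in Lemma \ref{basicpropertiesQ}(3), together with $v_p(k) = 0$, I get that $kx \in U_{p, N}^G$ if and only if $x \in U_{p, N - v_p(k)}^G = U_{p, N}^G$, so multiplication by $k$ is injective on $\bar G$. Hence the set of ``bad'' $\bar a$ violating any one of the inequalities has cardinality at most $n$. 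Finally, I would lower-bound $|\bar G|$: in the $\zz$-case, clause (2) of $T_{1,\zz}^-$ gives $U_{p, N}^G = p^N G$, and the elementary equivalence of $G$ to $\zz$ yields $|\bar G| = p^N$; in the $\qq$-case, Lemma \ref{basicpropertiesQ}(1) exhibits a subgroup isomorphic to $\zz/p^N \zz$ inside $\bar G$, so $|\bar G| \geq p^N$. Either way $|\bar G| \geq p^N \geq p > n$, producing a good $\bar a$; any lift $a_p \in G$ then witnesses $p$-satisfiability of $\psi(x, c, c')$.

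I do not anticipate any genuine obstacle here: the lemma functions as a warm-up whose only mild subtlety is treating $T_{1,\zz}^-$ and $T_{1,\qq}^-$ uniformly, which is possible because both axiomatizations supply the required valuation-shift behaviour of the $U_{p,l}$'s and the quotient cardinality bound $|\bar G| \geq p^N$.
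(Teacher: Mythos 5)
Your proof is correct and follows essentially the same pigeonhole argument as the paper: both count residues modulo $U^G_{p,\,2+v_p(m)}$, using that $k$ is invertible there (since $p>|k|$) and that the quotient has size at least $p>n$, so one of the at-most-$n$ forbidden classes can be avoided. The only cosmetic difference is that the paper first reduces to searching for $a_p$ in the prime subgroup $\zz\cdot 1\subseteq G$ and then works in $U^G_{p,0}/U^G_{p,\,2+v_p(m)}\cong\zz/p^{2+v_p(m)}\zz$, whereas you count directly in $G/U^G_{p,\,2+v_p(m)}$, thereby sidestepping the paper's slightly awkward step of reducing to $c_i\in U^G_{p,0}$.
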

\begin{proof}
Let $\psi (x, z, z')$ be the special formula corresponding to a parameter choice $(k,m,\Theta)$, and  $B$, $(G; \sU^G, \sP^G)$ as in the statement of the lemma. Suppose $\psi(x, c, c')$ is a $G$-system, $p>B$, and $\psi_p(x,z,z')$ is the associated $p$-condition  of $\psi(x,z,z')$. Then $\psi_p(x, c, c')$ is equivalent to
$$  \bigwedge_{i=1}^n (kx+c_i \notin U_{p, 2+v_p(m)})\  \text{ in  } (G; \sU^G, \sP^G).$$
We will show a stronger statement that there is  a $a_p\in \zz$ satisfying the latter. Note that for all $d\notin U_{p,0}^G$, we have that $(ka+d \notin U_{p, 0})$ for all $a\in \zz$. From Lemma \ref{basicpropertiesQ}, we have that $U_{p,l}^G \subseteq U_{p,k}^G$ whenever $k<l$, so we can assume that $c_i \in U_{p,0}^G$ for $i \in \{1, \ldots, n\}$.  In light of  Lemma \ref{basicpropertiesZ} $\mathrm{(i)}$ and Lemma \ref{basicpropertiesQ} $\mathrm{(i)}$,  we have that 
$$ U^G_{p, 0}\slash U^G_{p, 2+v_p(m)} \cong_{L} \zz \slash (p^{2+v_p(m)}\zz).$$
It is easy to see that $k$ is invertible $\mathrm{mod}\ p^{2+v_p(m)}$ and that $p^{2+v_p(m)}>n$. Choose $a_p$ in $\{0, \ldots, p^{2+v_{p}(m)}-1\}$  such that the images of $ka_p+c_1, \ldots, ka_p+c_n$ in $\zz \slash (p^{2+v_p(m)}\zz)$ are not $0$. We check that $a_p$ is as desired.
\end{proof}

\begin{cor}
There is an $L_\mathrm{u}^*$-theory $\TSFZ$ such that the models of $\TSFZ$ are the  generic models of $\WSFZ$. Similarly, there is an $L_\mathrm{u}^*$-theory $\TSFQ$ and an $L_\mathrm{ou}^*$-theory $\TOSFQ$ satisfying the corresponding condition for $\WSFQ$ and $\WOSFQ$.
\end{cor}

\noindent In the rest of the paper, we fix $\TSFZ$, $\TSFQ$, and $\TOSFQ$ to be as in the previous lemma.  We can moreover arrange them to be recursive. In the remaining part of this section, we will show that  $(\zz; \sU^\zz, \sP^\zz)$, $(\qq; \sU^\qq, \sP^\qq)$ and $(\qq; <, \sU^\qq, \sP^\zz)$ are models of  $\TSFZ$, $\TSFQ$, and $\TOSFQ$ respectively. The proof that the latter are in fact the full axiomatizations of the theories of the former needs to wait until next section. Further we fix $\SFZ$ and $\SFQ$ to be the theories whose models are precisely the $L_{\mathrm{u}}$-reducts of models of $\TSFZ$ and $\TSFQ$ respectively, and $\OSFQ$ to be the theory whose models are precisely $L_{\mathrm{ou}}$ reducts of models of $\TOSFQ$.  For the reader's reference, the following table lists all the languages, the corresponding theories and primary structures under consideration:

\vspace{0.1in}
\begin{center}
\begingroup
\renewcommand{\arraystretch}{1.25}
\begin{tabular}{|c|c|c|} 
\hline
Languages & Theories & Primary structures  \\ [0.2ex] 
\hline\hline 
$L$ & Th($\zz$), Th($\qq$) & $\zz, \qq$\\ [0.2 ex]
\hline
$L_{\mathrm{u}}$  & $\SFZ$, $\SFQ$ & $(\zz; \SF)$, $(\qq; \SQ)$  \\ [0.2ex]  
\hline
$L_{\mathrm{ou}}$  &$\OSFQ$ & $(\zz; <, \SF)$, $(\qq;<, \SQ)$  \\ [0.2ex] 
\hline
$L_{\mathrm{u}}^*$ & $\WSFZ$, $\TSFZ$, $\WSFQ$, $\TSFQ$& $(\zz; \sU^\zz, \sP^\zz)$, $(\qq; \sU^\qq, \sP^\qq)$\\ [0.2ex] 
\hline
$L_{\mathrm{ou}}^*$ & $\WOSFQ$, $\TOSFQ$ & $(\qq; <, \sU^\qq, \sP^\qq)$  \\ [0.2ex] 
\hline
\end{tabular}
\endgroup
\end{center}

\medskip \noindent Suppose $h\neq 0$. For a term $t(z) = k_1 z_1+ \ldots +k_n z_n +e$, let $t^h(z)$ be the term $k_1z_1+ \ldots+k_nz_n+ he$.  If $\varphi(z)$ is  a boolean combination of atomic formulas of the form $t(z) \in U_{p,l}$ or $t(z) \in P_{m}$ where $t(z)$ is an $L_\mathrm{u}^*$-term, define $\varphi^h(z)$ to be the formula obtained by replacing $t(z) \in U_{p,l}$ and $t(z) \in P_{m}$ in $\varphi(z)$ with $t^h(z) \in U_{p,l+v_p(h)}$ and $t^h(z) \in P_{mh}$ for every choice of  $p$, $l$, $m$ and $L_\mathrm{u}^*$-term $t$. It follows from Lemma \ref{basicpropertiesZ} $\mathrm{(iii)}$, $\mathrm{(vi)}$ and Lemma \ref{basicpropertiesQ} $\mathrm{(iii)}$ that across models of $\WSFZ$ and $\WSFQ$,
 $$\varphi^h(hz) \text{ is equivalent to } \varphi(z). $$
Moreover, if $\theta(z)$ is a $p$-condition, then  $\theta^h(z)$ is also  $p$-condition. If $\psi(x,z, z')$ is the special formula corresponding to a parameter choice $(k, m, \Theta)$ with $\Theta = \big(\theta_p(x,z,z')\big)$, then $\psi^h(x,z, z')$ is the special formula  corresponding to the parameter choice  $(k, hm, \Theta^h)$ with $\Theta^h = \big(\theta_p^h(x,z,z')\big)$. It is easy to see from here that:

\begin{lem}\label{BoundaryRobust}
For $h \neq 0$, any boundary of a special formula $\psi(x,z,z')$ is also a boundary of $\psi^h(x,z,z')$ and vice versa.
\end{lem}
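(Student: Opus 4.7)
The plan is to unwind the definitions and observe that $\psi \mapsto \psi^h$ preserves the first coordinate $k$ of the parameter choice and the length $n$ of the positive $P$-block, while only shifting indices inside the $U_{p,l}$ and $P_m$ atoms. Concretely, the paragraph preceding the lemma tells us that if $\psi$ corresponds to $(k, m, \Theta)$ with $\Theta = (\theta_p)$, then $\psi^h$ corresponds to $(k, hm, \Theta^h)$ with $\Theta^h = (\theta_p^h)$; in particular the two parameter choices share the same $k$ and the same $n$, so the inequality $B > \max\{|k|, n\}$ transfers verbatim from $\psi$ to $\psi^h$.

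The remaining step is to verify that $\theta \mapsto \theta^h$ preserves triviality of $p$-conditions. A $p$-condition is by definition a boolean combination of atoms of the form $t(z) \notin U_{p,l}$, and $\theta^h$ is obtained by replacing each such atom by $t(z) \notin U_{p,\, l+v_p(h)}$. Since this substitution acts only on atoms and leaves the surrounding boolean structure untouched, the empty conjunction is sent to the empty conjunction; hence $\theta_p$ trivial implies $\theta_p^h$ trivial. Combining the two observations, if $B$ is a boundary of $\psi$ then $B > \max\{|k|, n\}$ still holds for the parameter choice of $\psi^h$, and $\theta_p^h$ is trivial for all $p > B$ because each $\theta_p$ is; thus $B$ is a boundary of $\psi^h$.

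There is essentially no obstacle here: the lemma is a syntactic bookkeeping check. The only point worth noting is that triviality has been defined as a purely syntactic property (the boolean combination is the empty conjunction), so it is automatically preserved by any atom-wise substitution, regardless of how large $v_p(h)$ happens to be at the finitely many primes dividing $h$.
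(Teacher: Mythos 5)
Your proof is correct and takes essentially the same approach as the paper, which presents the lemma as an immediate consequence of the preceding paragraph's observation that $\psi^h$ corresponds to the parameter choice $(k, hm, \Theta^h)$ with $\Theta^h = (\theta_p^h)$. Your two checks, that $k$ and $n$ are unchanged and that atom-wise substitution preserves the empty conjunction, are exactly the bookkeeping this entails.
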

\noindent Let $\psi(x, z, z')$ be a special formula, $(G; \sU^G,\sP^G)$ a model of either $\WSFZ$ or $\WSFQ$, and $\psi(x, c, c')$ a $G$-system. Then $\psi^h(x, hc, hc')$ is also a $G$-system which we refer to as the {\bf $h$-conjugate} of $\psi(x, c, c')$. This has the property that $\psi^h(ha,hc,hc')$ if and only if $\psi(a,c,c')$ for all $a \in G$.

\medskip \noindent For $a$ and $b$ in $\zz$, we write $a\equiv_{n}b$ if $a$ and $b$ have the same remainder when divided by $n$. We need the following version of Chinese remainder theorem:
\begin{lem}\label{condense}
Suppose $B$ is in $\nn^{>0}$, $\Theta$ is a family $\big(\theta_p(x,z)\big)_{p \leq B}$ of $L^*_{\mathrm{u}}$-formulas with $\theta_p (x,z)$ being a $p$-condition for each $p\leq B$, and $c \in \zz^n$ is such that $\theta_p(x,c)$ defines a nonempty set in $(\zz; \sU^\zz, \sP^\zz)$ for all $p \leq B$. Then we can find $D$ $\in\nn^{>0}$ and $r \in  \{0, \ldots, D-1\} $ such that for all  $h\neq 0$ with  $\mathrm{gcd}(h,B!)=1$, we have 
$$a\equiv_{D} hr \text{ implies } \bigwedge_{p \leq B} \theta^h_p(a,hc)\ \  \text{ for all } a \in \zz.$$ 
\end{lem}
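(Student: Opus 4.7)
The plan is to invoke the Chinese remainder theorem, with the modulus $D$ chosen so that it depends only on $B$ and on the atomic structure of the formulas $\theta_p$, not on the particular $h$ or $c$. The key observation is that $\gcd(h,B!)=1$ forces $v_p(h)=0$ for every $p\leq B$, so that for these primes $h$-conjugation shifts the indices $l$ in $\theta_p^h$ by $v_p(h)=0$: the atoms of $\theta_p^h$ are literally those of $\theta_p$.

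I would first fix, for each $p\leq B$, a value $L_p$ equal to the maximum of the indices $l$ occurring among the atoms of $\theta_p$, and put $D=\prod_{p\leq B}p^{L_p}$. Because each term $t$ is $\zz$-linear and membership in $U_{p,l}$ is just divisibility by $p^l$, the truth value of $\theta_p^h(a,hc)$ should depend on $a$ only through $a\bmod p^{L_p}$. Next, using that the hypothesis furnishes some $a_p\in\zz$ with $\theta_p(a_p,c)$, the identity $\varphi^h(hz)\Leftrightarrow\varphi(z)$ recalled above the lemma should give $\theta_p^h(ha_p,hc)$. Consequently $\{a\in\zz:\theta_p^h(a,hc)\}$ contains a full congruence class modulo $p^{L_p}$, namely $ha_p+p^{L_p}\zz$.

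Since the moduli $p^{L_p}$ with $p\leq B$ are pairwise coprime, I would then apply the Chinese remainder theorem to produce $r_h\in\{0,\ldots,D-1\}$ with $r_h\equiv ha_p\pmod{p^{L_p}}$ for every $p\leq B$. For any $a\equiv_D r_h$ this should yield $\theta_p^h(a,hc)$ simultaneously for all $p\leq B$. The point that requires care is the uniformity of $D$ in $h$ and $c$: the $c$-dependence has to sit in the atoms rather than in the modulus (so I must verify that the residue-class description above is independent of $c$), while the $h$-dependence vanishes precisely because $v_p(h)=0$ for $p\leq B$ makes each $L_p$ depend only on $\theta_p$. I do not anticipate an obstacle beyond this bookkeeping.
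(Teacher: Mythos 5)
Your proof is correct and follows essentially the same route as the paper: take $L_p$ to be the largest index $l$ appearing in $\theta_p$ (equivalently in $\theta_p^h$, since $v_p(h)=0$ for $p\leq B$), set $D=\prod_{p\leq B}p^{L_p}$, observe that the truth of each atom of $\theta_p^h(x,hc)$ depends on $x$ only mod $p^{L_p}$, produce a witness for $\theta_p^h(\cdot,hc)$ from the given one for $\theta_p(\cdot,c)$, and glue by the Chinese remainder theorem. The only cosmetic difference is that you explicitly exhibit the witness as $ha_p$ via the identity $\theta_p^h(hz)\leftrightarrow\theta_p(z)$, whereas the paper simply invokes nonemptiness of $\theta_p^h(x,hc)$ and names an arbitrary witness $a_p$.
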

\begin{proof}
Let $B$, $\Theta$, and $c$ be as stated. Fix $h\neq 0$ such that $\mathrm{gcd}(h,B!)=1$. Hence, $v_p(h)=0$ for $p\leq B$, and so  the $p$-condition $\theta_p^h(x,z)$ is obtained from the $p$-condition $\theta_p(x,z)$ by replacing any atomic formula  $kx +t(z) \in U_{p, l}$ appearing in $\theta_p(x,z)$  with $kx +t^h(z) \in U_{p, l}$.
 Now for $p\leq B$, let $l_p$ be the largest value of $l$ occurring in an atomic formula in  $\theta_p(x,z)$. 
Set 
$$D = \prod_{p \leq B} p^{l_p}. $$
Obtain $a_p \in \zz$ such that $\theta_p(a_p, c)$ holds in $(\zz; \sU^\zz, \sP^\zz)$. Equivalently, we have $\theta^h_p(ha_p, hc)$ holds in $(\zz; \sU^\zz, \sP^\zz)$.  By the Chinese remainder theorem, we get $r$ in $\{0, \ldots, D-1\}$ such that 
$$r\equiv_{p^{l_p}} a_p\quad \text{ for all } p \leq B.$$ 
We check that $r$ is as desired. Suppose $a \in \zz$ is such that $a \equiv_D hr$. By construction, if $p\leq B$, $l \leq l_p$, and 
$kx+t(z) \in U_{p,l }$ is any atomic formula, then  $ka +t^h(hc) \in U^\zz_{p, l}$ if and only if $k(ha_p) +t^h(hc) \in U^\zz_{p, l}$. It follows that $\theta^h_p( a, hc)$ is equivalent to $\theta^h_p( ha_p, hc)$ in $(\zz; \sU^\zz, \sP^\zz)$. Thus $\theta^h_p( a, hc)$ holds for all $p \leq B$.  
\end{proof}

\noindent Towards showing that the structures of interest are generic, the key number-theoretic ingredient we need is the following result:

\begin{lem} \label{positivedensity} Let $\psi(x, z, z')$ be a special formula and $\psi(x,c,c')$ a nontrivial $\zz$-system which is locally satisfiable in $\zz$. For $h > 0$, and $s, t \in \qq$ with $s< t$, set 
$$\Psi^h(hs,ht) = \{ a \in \zz: \psi^h(a,hc,hc') \text{ holds and } hs<a<ht\}.$$ Then there exists $N \in \nn^{>0}$,  $\varepsilon\in (0,1)$, and $C\in \rr$ such that for all $h > 0$ with $\mathrm{gcd}(h,N!)=1$ and $s, t \in \qq$ with $s<t$, we have that
$$|\Psi^h(hs,ht)| \geq \varepsilon h(t-s) - \left(\sum_{i=1}^{n}\sqrt{|hks+hc_i|} + \sqrt{|hkt+hc_i|}\right)  + C. $$ 
\end{lem}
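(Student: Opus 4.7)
The plan is to combine Lemma \ref{condense}, a Chinese-remainder-type reduction, with a sieve bound on primes beyond a fixed boundary, while handling the $P_m$-non-memberships in $\psi$ by designating auxiliary primes to witness them.  Write $(k,m,\Theta)$ with $\Theta = (\theta_p)$ for the parameter choice of $\psi$, and let $B_0$ be a boundary.  Since the components of $c$ and $c'$ are pairwise distinct, $c_i - c'_{i'}$ is a nonzero integer for all $i, i'$.  Choose distinct primes $q_1, \ldots, q_{n'}$ with each $q_{i'}$ exceeding $B_0$, $|k|$, $m$, and $|c_i - c'_{i'}|$ for every $i$; in particular, $q_{i'} \nmid m$, $\gcd(k, q_{i'}) = 1$, and $q_{i'} \nmid (c_i - c'_{i'})$.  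Fix $\tilde B \geq \max\{B_0, q_1, \ldots, q_{n'}, m\}$ large enough that $\eta := \sum_{p > \tilde B} p^{-2} < 1/(4n)$, and set $N := \tilde B$.

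\medskip
\noindent Define a family $\tilde\Theta = (\tilde\theta_p)_{p \leq \tilde B}$ of $p$-conditions by $\tilde\theta_p := \psi_p$ for $p \notin \{q_1, \ldots, q_{n'}\}$ and $\tilde\theta_{q_{i'}} := \psi_{q_{i'}} \wedge (kx + z'_{i'} \in U_{q_{i'}, 2})$.  Local satisfiability of $\psi(x, c, c')$ makes each $\psi_p(x, c, c')$ nonempty in $\zz$, so $\tilde\theta_p(x, c, c')$ is nonempty when $p \notin \{q_1, \ldots, q_{n'}\}$; for $p = q_{i'}$, $\theta_{q_{i'}}$ is trivial, the added condition fixes $x$ modulo $q_{i'}^2$, and at this residue $kx + c_i \equiv c_i - c'_{i'} \not\equiv 0 \pmod{q_{i'}^2}$ by the choice of $q_{i'}$, so $\tilde\theta_{q_{i'}}(x, c, c')$ is also nonempty.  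Lemma \ref{condense} applied to $\tilde\Theta$ then produces $D \in \nn^{>0}$ such that for every $h > 0$ with $\gcd(h, \tilde B!) = 1$, some $r_h$ makes $a \equiv_D r_h$ imply $\tilde\theta_p^h(a, hc, hc')$ for all $p \leq \tilde B$; the proof of Lemma \ref{condense} builds $D$ from primes $\leq \tilde B$, so $\gcd(D, p) = 1$ for $p > \tilde B$.  Setting $A_0 := (hs, ht) \cap (r_h + D\zz)$, every $a \in A_0$ satisfies all clauses of $\psi^h(a, hc, hc')$ except possibly ``$ka + hc_i \notin U_{p, 2+v_p(mh)}$ at some $p > \tilde B$'': the $\theta_p^h$ clauses hold by CRT for $p \leq \tilde B$ and trivially for $p > \tilde B$ (Lemma \ref{BoundaryRobust}); the conditions $ka + hc_i \in P_{mh}$ at $p \leq \tilde B$ reduce to the $U$-non-memberships supplied by $\psi_p^h$, noting $v_p(mh) = v_p(m)$ since $\gcd(h, p) = 1$; and for each $i'$, the CRT forces $v_{q_{i'}}(ka + hc'_{i'}) \geq 2 = 2 + v_{q_{i'}}(mh)$, yielding $ka + hc'_{i'} \notin P_{mh}$.

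\medskip
\noindent Call $a \in A_0$ \emph{bad} if $p^{2+v_p(mh)} \mid ka + hc_i$ for some $p > \tilde B$ and some $i$.  For each such $(p, i)$, since $\gcd(D, p) = \gcd(k, p) = 1$, the divisibility pins $a$ to a single class modulo $Dp^{2+v_p(mh)}$, contributing at most $h(t-s)/(Dp^{2+v_p(mh)}) + 1$ members to $A_0$.  Since $p^{-(2+v_p(mh))} \leq p^{-2}$ whether $p \mid h$ or not, the main-term sum over $p > \tilde B$ and $i$ is at most $2nh(t-s)\eta/D$.  A ``$+1$'' contributes only when the corresponding set is nonempty, which forces $p^{2+v_p(mh)} \leq \max_{a \in (hs, ht)} |ka + hc_i| \leq T_i^2$ with $T_i := \sqrt{|hks + hc_i|} + \sqrt{|hkt + hc_i|}$, hence $p \leq T_i$; the number of such primes is at most $\pi(T_i) \leq T_i$ per $i$, summing to $\sum_{i=1}^n T_i$.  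Combined with $|A_0| \geq h(t-s)/D - 1$, we obtain
\[
|\Psi^h(hs, ht)| \geq \frac{(1 - 2n\eta)\, h(t-s)}{D} - \sum_{i=1}^n T_i - 1,
\]
so $\varepsilon := (1 - 2n\eta)/D \geq 1/(2D)$ and $C := -1$ suffice.  The main obstacle is that the $P_m$-non-memberships in $\psi$ are not reflected in any local condition $\psi_p$, so local satisfiability alone does not supply witnesses; the fix is to manufacture them via the chosen $q_{i'}$, which requires the pairwise distinctness of the components of $c$ and $c'$ to keep the new congruences compatible with the original local data.
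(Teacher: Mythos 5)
Your proof follows the same general plan as the paper's: apply Lemma \ref{condense} to control residues at primes below a fixed bound, introduce auxiliary primes $q_{i'}$ to witness the $\notin P_m$ clauses for the $c'_{i'}$, and sieve out failures of $\in P_m$ at the remaining large primes. You differ organizationally: you fold the auxiliary-prime conditions $kx + z'_{i'} \in U_{q_{i'},2}$ into the family handed to Lemma \ref{condense}, whereas the paper keeps them outside that lemma and handles them in an explicit CRT count over primes in $(B, M]$; and you sieve directly from a single arithmetic progression $A_0$ rather than introducing an intermediate cutoff $M$, lower-bounding an auxiliary set $\Psi^h_M$ by an infinite product, and then comparing $\Psi^h_M$ with $\Psi^h$. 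Your packaging is somewhat more streamlined; the density constant $\varepsilon$ comes out of a simple complementary sieve $1 - 2n\eta$ rather than a convergent infinite product (and in fact the factor of $2$ in $2n\eta$ is unnecessary, though harmless).

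One small remark applying to both your argument and the paper's: the claim that a nonempty set $\{a \in A_0 : p^{2+v_p(mh)} \mid ka+hc_i\}$ forces $p \leq T_i$ rests on $p^2 \leq |ka+hc_i|$ for some witness $a$, which fails in the degenerate case $ka + hc_i = 0$. For each fixed $i$ there is at most one such $a$, so this contributes at most $n$ elements to the bad set across all primes; split those off first and take $C := -(n+1)$, and the bound goes through as stated.
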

\begin{proof}
Throughout this proof, let  $\psi(x,z,z')$, $\psi(x,c,c')$,  and $\Psi^h(hs,ht)$ be as stated. We first make a number of observations. Suppose $\psi(x,z,z')$ corresponds to the parameter choice $(k,m,\Theta)$ and has a boundary $B$, and $\psi_p(x,z,z')$ is the associated $p$-condition of  $\psi(x,z,z')$.  Then $\psi^h(x,z,z')$ corresponds to the parameter choice $(k,hm,\Theta^h)$, and $B$ is also a boundary of $\psi^h(x,z,z')$ by Lemma \ref{BoundaryRobust}. Moreover $\psi^h_p(x,z,z')$ is the associated $p$-condition of $\psi^h(x,z,z')$. Since $\psi(x,c,c')$ is locally satisfiable in $\zz$, we can use Lemma \ref{condense} to fix $D\in\nn^{>0}$ and $r\in \{0,\ldots, D-1\}$ such that for each $h > 0$ with $\mathrm{gcd}(h,B!)=1$, we have
$$  a\equiv_{D} hr \text{ implies } \bigwedge_{p \leq B} \psi^h_p(a,hc, hc')\ \  \text{ for all } a \in \zz.  $$
We note that $D$ here is independent of the choice of $h$ for all $h$ with $\mathrm{gcd}(h,B!)=1$.

We introduce a variant of  $\Psi^h(hs,ht)$ which is needed in our estimation of $|\Psi^h(hs,ht)|$. Until the end of the proof, set $l_p =2 + v_p(m)$.  Fix primes $p_1, \ldots, p_{n'}$ such that $p_1 > c_i$ for all  $i \in \{1, \ldots, n\}$, $p_1 > c'_{i'}$ for all $i' \in \{1, \ldots, n'\}$ and $$ B<p_1 < \ldots < p_{n'}.$$  For $M>p_{n'}$, $h > 0$ with $\mathrm{gcd}(h,B!)=1$, define $\Psi^h_{M}(hs,ht)$ to be the set of $a \in \zz$ such that $hs<a<ht$ and
$$(a\equiv_{D} hr) \wedge\bigwedge_{B< p \leq M}\big( \bigwedge_{i=1}^{n}(ka+hc_i \not \equiv_{p^{l_{p}+v_p(h)}} 0)\big)\wedge\bigwedge_{i'=1}^{n'}(ka+hc'_{i'} \notin P_{hm}^{\zz}).$$
It is not hard to see that $\Psi^h(hs,ht)\cap \{a\in \zz: a\equiv_D hr\}\subseteq \Psi^h_{M}(hs,ht)$, and the latter is intended to be an upper approximation of the former. The desired lower bound for $|\Psi^h(hs,ht)|$ will be obtained via a lower bound for $|\Psi^h_{M}(hs,ht)|$ and an upper bound for $|\Psi^h_{M}(hs,ht)\setminus \Psi^h(hs,ht)| $. 

Now we work towards establishing a lower bound on $|\Psi^h_{M}(hs,ht))|$ in the case where $M> p_{n'} $, $h > 0$, and $\text{gcd}(h, M!)=1$. The latter assumption implies in particular that $p^{l_p + v_p(h)} = p^{l_p}$ for all $p \leq M$. For $p>B $, we have that $p>|k|$ and so $k$ is invertible $\mathrm{mod}\ p^{l_p}$.
Set $$\Delta = \{p :  B < p \leq M\}\setminus \{ p_{i'} : 1 \leq i' \leq n'  \}.$$ 
For  $p\in \Delta$, as $k$ is invertible $\mathrm{mod}\ p^{l_p}$, there are at least $p^{l_{p}}- n$ (note we have $p>B>n$) choices of $r_p$  in $\{0, \ldots, p^{l_{p}}-1 \}$ such that  if $ a \equiv_{p^{l_p}} r_{p}$, then
$$ \bigwedge_{i=1}^n  (ka+hc_i \not \equiv_{p^{l_p}} 0). $$ 
Suppose $p = p_{i'}$ for some $i' \in \{1, \ldots, n'\}$. By the assumption that $\psi(x,c,c')$ is nontrivial, $c$ has no common components with $c'$. Since $\text{gcd}(h, M!)=1$,  $h$ and $p$ are coprime, and so the components of $hc$ and $hc'$ are pairwise distinct $\mathrm{mod}\ p^{l_p}$. As $k$ is invertible $\mathrm{mod}\ p^{l_p}$, there is exactly one  $r_p$ in  $\{ 0, \ldots, p^{l_{p}}-1 \}$ such that if $a \equiv_{p^{l_p}} r_p$, then
$$  \bigwedge_{i=1}^n (ka+hc_i \not \equiv_{p^{l_p}} 0) \wedge (ka+hc'_{i'} \equiv_{p^{l_p}} 0)  \ \text{ and consequently }\  ka+hc'_{i'} \notin P_{hm}^{\zz}  .$$
Now it follows by the Chinese remainder theorem that,
$$|\Psi^h_{M}(hs,ht)|\geq \left\lfloor \frac{ht-hs}{D\prod_{B<p\leq M} p^{l_p}} \right\rfloor \prod_{p \in \Delta}\left(p^{l_p}-n\right).$$
Then it follows that,
$$|\Psi^h_{M}(hs,ht)|\geq \frac{ht-hs}{D}\prod_{p \leq p_{n'}}\frac{1}{p^{l_p}}\prod_{p> p_{n'}}^{\leq M} \left(1-\frac{n}{p^{l_p}}\right) - \prod_{p\leq M} p^{l_p}. $$
Set
$$\varepsilon = \frac{1}{2D}\prod_{p \leq p_{n'}}\frac{1}{p^{l_p}}\prod_{p > p_{n'}} \left(1-\frac{n}{p^{l_p}}\right).$$
Now as $l_p \geq 2$, for $U$ $\in \nn^{>0}$ with $U>\max\{p_{n}',n^2\}$ we have that
$$\prod_{p > U} \left(1-\frac{n}{p^{l_p}}\right)>\prod_{p>U}\left(1-\frac{1}{p^{\frac{3}{2}}}\right).$$
Hence, it follows from  Euler's product formula that $\varepsilon>0$. We now have 
$$|\Psi^h_{M}(hs,ht)| \geq 2\varepsilon(ht-hs) - \prod_{p\leq M} p^{l_p}.$$ 
We note that $\varepsilon$ is independent of the choice of $M$ and $h$, and will serve as the promised $\varepsilon$ in the statement of the lemma.

Next we obtain a upper bound on $|\Psi^h_{M}(s,t)\setminus \Psi^h(s,t)|$ for $M> p_{n'} $ $h > 0$ and $\text{gcd}(h, M!)=1$. We  arrange that $k>0$ by replacing $c$ by $-c$ and $c'$ by $-c'$ if necessary. Note that an element   $a \in \Psi^h_{M}(s,t) \setminus  \Psi^h(s,t)$ must be such that
$$hks+hc_i < ka+hc_i < hkt+hc_i \quad \text{for all } i \in \{1, \ldots, n\}$$ and  $ka+hc_i$ is a multiple of $p^{l_p}$ for some $p>M$ and $i \in \{1, \ldots, n\}$. For each $p$ and $i \in \{1, \ldots, n\}$, the number of non-zero multiples of $p^{l_p}$ in $(hks+hc_i, hkt+hc_i)$ is 
$$\lfloor hk(t -s) p^{-l_p}\rfloor -2, \ \text{or}\  \lfloor hk(t -s) p^{-l_p}\rfloor -1, \ \text{or}\ \lfloor hk(t -s) p^{-l_p}\rfloor, \ \text{or}\  \lfloor hk(t -s) p^{-l_p} \rfloor +1.$$
In the last case, as $l_p \geq 2$ we  moreover have
$$ p^2\leq |hks+hc_i| \quad \text{ or }\quad p^2\leq |hkt+hc_i|,$$
and so
$$p\leq \sqrt{|hks+hc_i|} + \sqrt{|hkt+hc_i|}.$$ 
As $l_p \geq 2$, we have  $ \lfloor hk(t -s) p^{-l_p}\rfloor \leq hk(t-s)p^{-2}$. Therefore we have that 
$$|\Psi^h_{M}(s,t)\setminus \Psi^h(s,t)|\leq h(t-s)\sum_{p>M}\frac{nk}{p^2} + \left(\sum_{i=1}^{n}\sqrt{|hks+hc_i|} + \sqrt{|hkt+hc_i|}\right) +1 .$$ 

We now obtain $N$ and $C$ as in the statement of the lemma. Note that $$\sum_{p>T}p^{-2} \leq \sum_{n>T}n^{-2} = O( T^{-1}). $$ Using this, we obtain $N \in \nn^{>0}$ such that $N>p_{n'}$ and $ \sum_{p>N}knp^{-2}< \varepsilon$ where $\varepsilon$ is from the preceding paragraph. Set $C = - \prod_{p\leq N} p^{l_p} -1 $. Combining the estimations from the preceding two paragraphs for $M=N$ it is easy to see that $\varepsilon, N, C$ are as desired.
\end{proof}

\begin{rem}\label{forinstance}
The above weak lower bound is all we need for our purpose. We expect that a stronger estimate can be obtained using modifications of available techniques in the literature; see for example \cite{Mirsky}.
\end{rem}

\begin{cor}
For all $c\in \zz$, there is $a\in \zz$ such that 
$$ a+c \in \SF\  \text{ and }\  a+c+1 \in \SF. $$
\end{cor}
\begin{proof}
We have that for all $c\in \zz$, $\psi(x,c)=(x+c\in \SF)\wedge(x+c+1\in \SF)$ is a locally satisfiable $\zz$-system. Applying Lemma \ref{positivedensity} for $h=1$, $s=0$, and $t$ sufficiently large we see there is a solution $a\in\zz$ for $\psi(x,c)$.
\end{proof}

\noindent We next prove the main theorem of the section:

\begin{thm} \label{genericity}
The $\WSFZ$-model $( \zz; \sU^\zz, \sP^\zz)$, the $\WSFQ$-model $(\qq; \sU^\qq, \sP^\qq)$,  and the $\WOSFQ$-model $(\qq; <, \sU^\qq, \sP^\qq)$  are generic.
\end{thm}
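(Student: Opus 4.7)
The plan is to derive each genericity assertion from the density bound of Lemma~\ref{positivedensity} by reducing to an appropriate $\zz$-system. For $(\zz; \sU^\zz, \sP^\zz)$ the reduction is trivial: given a locally satisfiable $\zz$-system $\psi(x, c, c')$, apply Lemma~\ref{positivedensity} with $h = 1$, $s = 0$, and $t \to \infty$; the bound $\varepsilon t - O(\sqrt{t}) + C$ grows without bound, so $\psi(x, c, c')$ has infinitely many $\zz$-solutions.

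For the two $\qq$-cases, I would first clear denominators. Let $\psi(x, c, c')$ be a locally satisfiable $G$-system for $G = \qq$ (ordered or not), with $\psi$ corresponding to a parameter choice $(k, m, \Theta)$ of boundary $B$. Choose $h_0 \in \nn^{>0}$ with $h_0 c, h_0 c' \in \zz^n, \zz^{n'}$ and, for each prime $p \leq B$, with $v_p(h_0)$ large enough that $\bar\psi := \psi^{h_0}(x, h_0 c, h_0 c')$ is locally satisfiable in $\zz$. For $p > B$, Lemma~\ref{local condition 1} applied to $\psi^{h_0}$ directly yields $p$-satisfiability in $\zz$ (using that $B$ is still a boundary of $\psi^{h_0}$, by Lemma~\ref{BoundaryRobust}). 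For each of the finitely many $p \leq B$, take a $\qq$-witness $a_p$ to the $p$-satisfiability of $\psi$: then $h_0 a_p \in \qq$ satisfies the $p$-condition associated to $\bar\psi$, and choosing $v_p(h_0) \geq -v_p(a_p)$ places $h_0 a_p$ in $\zz_{(p)}$; since this $p$-condition depends only on the residue modulo a fixed power of $p$, density of $\zz$ in $\zz_{(p)}$ in the $p$-adic topology yields an integer satisfying it.

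With $\bar\psi$ locally satisfiable in $\zz$, Lemma~\ref{positivedensity} produces the needed solutions. For the unordered $\qq$-case, take $h = 1$, $s = 0$, $t \to \infty$: infinitely many integers $a$ satisfy $\bar\psi(a, h_0 c, h_0 c')$, and each yields a distinct $a/h_0 \in \qq$ satisfying $\psi$, giving infinite satisfiability. For the ordered $\qq$-case, given an interval $(b, b')^\qq$, take $s = h_0 b$, $t = h_0 b'$, and $h$ coprime to $N!$ (where $N$ is provided by Lemma~\ref{positivedensity} applied to $\bar\psi$) sufficiently large; the bound $\varepsilon h h_0 (b' - b) - O(\sqrt{h h_0})$ is then positive, yielding an integer $a \in (h h_0 b, h h_0 b')$ with $\bar\psi^h(a, h h_0 c, h h_0 c')$, whence $a/(h h_0) \in (b, b')^\qq$ satisfies $\psi$.

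The principal obstacle is the verification that $\bar\psi$ is locally satisfiable in $\zz$ for an appropriate $h_0$: this uses the finiteness of the boundary $B$ (so that only finitely many primes need special care) together with $p$-adic density of $\zz$ in $\zz_{(p)}$ to transfer the $\qq$-witnesses provided by local satisfiability of the original $G$-system into $\zz$-witnesses after suitable scaling.
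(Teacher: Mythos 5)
Your proof is correct and follows essentially the same path as the paper's: apply Lemma~\ref{positivedensity} directly for $\zz$, and for the $\qq$-cases first reduce to a $\zz$-system locally satisfiable in $\zz$ by clearing denominators (using Lemma~\ref{local condition 1} and Lemma~\ref{BoundaryRobust} for the primes above the boundary), then apply Lemma~\ref{positivedensity} with a suitable scaling factor $h$. The only cosmetic differences are that you handle the unordered $\qq$-case directly rather than deducing it from the ordered one, and that you use $p$-adic density of $\zz$ in $\zz_{(p)}$ to produce integer local witnesses, whereas the paper simply enlarges $h_0$ so that it also clears the denominators of the chosen $\qq$-witnesses $a_p$.
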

\begin{proof}
We get the first part of the theorem by applying Lemma \ref{positivedensity} for $h =1$, $s =0$, and $t$ sufficiently large.  As the second part of the theorem follows easily from the third part, it will be enough to show that the $\WOSFQ$-model $(\qq; <, \sU^\qq, \sP^\qq)$ is generic.  Throughout this proof, suppose $\psi(x,z,z)$ is a special formula and  $\psi(x, c, c')$  is  a $\qq$-system which is nontrivial and locally satisfiable in $\qq$.  Our job is to show that the $\qq$-system $\psi(x, c, c')$ has a solution in the $\qq$-interval $(b, b')^\qq$ for an arbitrary choice of $b, b' \in \qq$ such that $b<b'$.

We first reduce to the special case where $\psi(x, c, c')$ is also a $\zz$-system which is nontrivial and locally satisfiable in $\zz$. Let $B$ be the boundary of $\psi(x,z,z')$ and for each $p$, let $\psi_p(x,z,z')$ be the associated $p$-condition of $\psi(x,z,z')$. Using the assumption that $\psi(x, c, c')$ is locally satisfiable $\qq$-system, for each $p<B$ we obtain $a_p \in \qq$ such that $\psi_p(a_p,c,c')$ holds. Let $h>0$ be such that  
$$ hc \in \zz^n, hc' \in \zz^{n'} \text{ and } ha_p \in \zz \text{ for all } p<B. $$
Then by the choice of $h$ , Lemma \ref{local condition 1}, and Lemma \ref{BoundaryRobust}, the $h$-conjugate $\psi^h(x,hc,hc')$ of $\psi(x, c, c')$ is a $\zz$-system which is nontrivial and locally satisfiable in $\zz$.
On the other hand, $\psi(x,c,c')$ has a solution in a interval $(b,b')^\qq$ if and only if $$\psi^h(x,hc,hc') \text{ has a solution in } (hb,hb')^{\qq}.$$ Hence,  by replacing $\psi(x,z,z')$ with $\psi^h(x,z,z')$, $\psi(x, c, c')$ with $\psi^h(x,hc,hc')$, and $(b, b')^{\qq}$ with $(hb, hb')^{\qq}$ if necessary we get the desired reduction. 

We show $\psi(x, c, c')$ has a solution in the $\qq$-interval $(b, b')^\qq$ for the special case in the preceding paragraph. By an argument similar to the preceding paragraph, it suffices to show that for some $h\neq 0$, $\psi^h(x, hc, hc')$ has a solution in $(hb, hb')^\qq$. Applying Lemma \ref{positivedensity} for $s =b$, $t=b'$, and $h$ sufficiently large satisfying the condition of the lemma, we get the desired conclusion.
\end{proof}

\section{Logical Tameness}
\noindent We will next prove that $\TSFZ$, $\TSFQ$, and $\TOSFQ$ admit quantifier elimination. We first need a technical lemma saying that modulo  $\WSFZ$ or $\WSFQ$, an arbitrary quantifier free formula $\phi(x,y)$ is not much more complicated than a special formula; recall that $x$ always denotes a single variable.

\begin{lem}\label{SimplifytoG}
Suppose $\varphi(x,y)$ is a quantifier-free $L^*_{\mathrm{u}}$-formula. Then $\varphi(x,y)$ is  equivalent modulo $\WSFZ$ to a disjunction of quantifier-free formulas of the form
$$  \rho(y) \wedge \varepsilon(x, y)  \wedge \psi(x, t(y), t'(y))  $$
where
\begin{enumerate}
\item[$\mathrm{(i)}$] $t(y)$ and $t'(y)$ are tuples of $L^*_{\mathrm{u}}$-terms with length $n$ and $n'$respectively;
\item[$\mathrm{(ii)}$] $\rho(y)$ is a quantifier-free $L^*_{\mathrm{u}}$-formula, $ \varepsilon(x, y)$ an equational condition, $\psi(x, z, z')$ a special formula.
\end{enumerate}
The corresponding statement with $\WSFZ$ replaced by $\WSFQ$ also holds.
\end{lem}
\begin{proof}

Let  $\varphi(x, y)$ be a quantifier-free $L^*_{\mathrm{u}}$-formula. We will use the following disjunction observation several times in our proof: If $\varphi(x,y)$ is a finite disjunction of quantifier-free $L^*_{\mathrm{u}}$-formulas and we have proven the desired statement for each of those, then the desired statement for $\varphi(x,y)$ follows. In particular, it allows us to assume that $\varphi(x, y)$ is the conjunction
$$ \rho(y)  \wedge \varepsilon(x, y) \wedge \bigwedge_p \eta_p(x, y) \wedge \bigwedge_{i=1}^n (k_ix+t_i(y) \in P_{m_i}) \wedge \bigwedge_{i=1}^{n'} (k'_ix+t'_{i}(y) \notin P_{m'_i})  $$
where $\rho(y)$ is a quantifier-free $L^*_{\mathrm{u}}$-formula, $\varepsilon(x, y)$ is an equational condition,   $k_1, \ldots, k_n$ and $k_1', \ldots, k'_{n'}$ are in $\zz\setminus\{0\}$, $m_1, \ldots, m_n$ and $m'_1, \ldots, m'_{n'}$ are in $\nn^{\geq 1}$, $t_1(y), \ldots, t_n(y)$ and $t'_1(y), \ldots, t'_n(y)$ are $L^*_{\mathrm{u}}$-terms with variables in $y$,  $\eta_p(x,y)$ is a $p$-condition for each $p$, and $\eta_p(x,y)$ is trivial for all but finitely many $p$. 

We make further reductions to the form of $\varphi(x,y)$. Set $t(y) =(t_1(y), \ldots, t_n(y))$ and $t'(y)=(t'_1(y), \ldots, t'_{n'}(y))$.  Using the disjunction observation and the fact that 
$$(x+ y_j \in P_1) \vee (x + y_j \notin P_1)$$ is a tautology for every component $y_j$ of $y$, we can assume that either $x+ y_j \in P_1$ or $x + y_j \notin P_1$ are among the conjuncts of $\varphi(x, y)$, and so $y_j$ is among the components of $t(y)$ or $t'(y)$. Then we obtain for each prime $p$ a $p$-condition $\theta_p(x, z, z')$ such that $\theta_p(x, t(y), t'(y))$ is logically equivalent to $\eta_p( x, y)$. Let $\xi(x, z, z')$ be the formula 
$$ \bigwedge_p \theta_p(x, z, z') \wedge \bigwedge_{i=1}^n (k_ix+z_i\in P_{m_i}) \wedge \bigwedge_{i=1}^{n'} (k'_ix+z'_i \notin P_{m'_i}).$$
Clearly, $\varphi(x,y)$ is equivalent to the formula $\rho(y) \wedge \varepsilon(x, y) \wedge \xi(x, t(y), t'(y))$, so we can assume that $\varphi(x,y)$ is the latter.

We need a small observation. For a $p$-condition $\theta_p(z)$ and $h \neq 0$, we will show that there is another $p$-condition $\eta_p(z)$ such that modulo $\WSFZ$ and $\WSFQ$, 
$$ \eta_p(z_1, \ldots, z_{i-1}, hz_i, z_{i+1}, \ldots, z_n) \  \text{ is equivalent to } \  \theta_p(z). $$
For the special case where $\theta_p(z)$ is $t(z) \in U_{p, l}$, the conclusion follows from Lemma \ref{basicpropertiesZ}(iii), Lemma \ref{basicpropertiesQ}(iii) and the fact that there is an $L^*_{\mathrm{u}}$-term $t'(z)$ such that $t'(z, \ldots, z_{i-1}, hz_i, z_{i+1}, \ldots, z_n) = ht(z).$
The statement of the paragraph follows easily from this special case. 

With $\varphi(x,y)$ as in the end of the second paragraph, we further reduce the main statement to the special case where there is $k\neq 0$ such that $k_i =  k'_{i'} =k$ for all $i\in \{1, \ldots, n\}$ and $i' \in \{ 1, \ldots, n'\}$. Choose $k\neq 0 $ to be a common multiple of $k_1, \ldots, k_n$ and $k'_1, \ldots k'_{n'}$. Then by Lemma \ref{basicpropertiesZ}(vi) and Lemma \ref{basicpropertiesQ}(iii), we have for each $i \in \{1, \ldots, n\}  $ that
$$ k_ix+z_i \in P_{m_i}\  \text{ is equivalent to }\ (kx+kk^{-1}_iz_i \in P_{kk^{-1}_im_i})   \text{ modulo either } \WSFZ \text{ or }\WSFQ. $$
We have a similar observation for $k$ and $k'_{i'}$ with $i' \in \{1, \ldots, n'\}$. The desired reduction easily follows from these observations and the preceding paragraph.

Continuing with the reduction in the preceding paragraph, we next arrange that  there is $m > 0$ such that $m_i =  m'_{i'} =m$ for all $i\in \{1, \ldots, n\}$ and $i' \in \{ 1, \ldots, n'\}$.  Let $m$ be a common multiple of $m_1, \ldots, m_n$ and $m'_1, \ldots m'_{n'}$.  By Lemma \ref{basicpropertiesZ}(v, vi) and Lemma \ref{basicpropertiesQ}(iii), we have for $i \in \{ 1, \ldots, n\}$ that modulo either $\WSFZ$  or $\WSFQ$
$$ kx+z_i \in P_{m_i} \  \text{ is equivalent to  }\ kx+z_i \in P_{m} \wedge \bigwedge_{p \mid \frac{m}{m_i}}  kx+z_i \notin U_{p, 2+ v_p(m_i)} $$
and for $i' \in \{ 1, \ldots, n'\}$ that modulo either $\WSFZ$  or $\WSFQ$
$$ kx+z'_{i'} \notin P_{m'_{i'}} \  \text{ is equivalent to }\ kx+z'_{i'} \notin P_{m} \vee \bigvee_{p \mid \frac{m}{m'_{i'}}} kx+z'_{i'} \in U_{p,2+ v_p(m'_{i'})} .  $$
It follows that $\varphi(x, y)$ is equivalent to a disjunction of formulas of the form we are aiming for. The desired conclusion of the lemma follows from the  disjunction observation.
\end{proof}

\begin{cor}\label{SimplifytoG2}
Suppose $\varphi(x,y)$ is a quantifier-free $L^*_{\mathrm{ou}}$ formula. Then $\varphi(x,y)$ is  equivalent modulo $\WOSFQ$ to a disjunction of quantifier-free formulas of the form
$$  \rho(y) 
\wedge \lambda(x,y) \wedge \psi(x, t(y), t'(y))  $$
where
\begin{enumerate}
\item[$\mathrm{(i)}$] $t(y)$ and $t'(y)$ are tuples of $L^*_{\mathrm{ou}}$-terms with length $n$ and $n'$respectively;
\item[$\mathrm{(ii)}$] $\rho(y)$ is a quantifier-free $L^*_{\mathrm{ou}}$-formula, $ \lambda(x, y)$ an order condition, $\psi(x, z, z')$ a special formula.
\end{enumerate}
\end{cor}

\noindent In the next lemma, we show a ``local quantifier elimination'' result.
\begin{lem} \label{localQE}
If $\varphi(x,z)$ is a $p$-condition, then modulo either $\WSFZ$ or $\WSFQ$, the formula  $\exists x \varphi(x,z)$ is equivalent to a $p$-condition $\psi(z)$.
\end{lem}
\begin{proof}
If $\varphi(x,z)$ is a $p$-condition, then by Lemma \ref{basicpropertiesZ} $(\mathrm{i})$, modulo $\WSFZ$, it is a boolean combination of atomic formulas of the form $kx +t(z) \in U_{p, l}$ where $t(z)$ is an $L^*_{\mathrm{u}}$-term, and $l>0$. Let $l_p$ be the largest value of $l$ occurring in such atomic formulas, and set $$S=\{(m_1,\ldots,m_n): 0 \leq m_i < p^{l_p} \text{ for each $i$, and } (\zz; \sU^\zz) \models \exists x \varphi(x,m_1,\ldots,m_n)\}.$$ Then by Lemma \ref{basicpropertiesZ} $(\mathrm{i})$, modulo $\WSFZ$, $\exists x \varphi(x,z)$ is equivalent to the $p$-condition $\bigvee_{(m_1,\ldots,m_n)\in S} (\bigwedge_{i=1}^{n} (z_i\equiv_{p^{l_p}} m_i))$ .

Now, we proceed to prove the statement for models of $\WSFQ$. Throughout the rest of the  proof, suppose $\varphi(x,z)$ is a $p$-condition, $k$, $k'$, $l$, $l'$ are in $\zz$, and $t(z), t'(z)$ are $L_{\mathrm{u}}^*$-terms. First, we consider the case where $\varphi(x,z)$ is a  $p$-condition of the form 
$kx+t(z) \in U_{p,l}$. 
The case $k=0$ is trivial.  If $k \neq 0$, then $\exists x (kx+t(z) \in U_{p,l})$ is tautological modulo $\WSFQ$ following from (Q1) in the definition of $\WSFQ$ and Lemma \ref{basicpropertiesQ}(i).

We next consider the case where $\varphi(x,z)$ is a finite  conjunction of $p$-conditions in $L^*_{\mathrm{u}}(x, z)$ such that one of the conjuncts is $kx+t(z) \in U_{p,l}$ with $k \neq 0$ and the other conjuncts are either of the form $k'x+t'(z) \in U_{p,l'}$ or of the form $k'x+t'(z) \notin U_{p,l'}$ where we do allow $l'$ to vary.  It follows from Lemma \ref{basicpropertiesQ}(i) that if $k = k'$, $l\geq l'$, then
$$  k'x + t'(z)\in U_{p, l'}  \ \text{ if and only if } \  t(z)-t'(z) \in U_{p, l'}. $$
So we have means to replace conjuncts of $\varphi(x,z)$ by  terms independent of the variable $x$. However, the above will not work if $k \neq k'$ or $l<l'$. By Lemma \ref{basicpropertiesQ}(iii), across models of $\WSFQ$,  we have that 
$$kx+ t(z) \in U_{p, l} \text{ if and only if } hkx + ht(z) \in U_{p, l+ v_p(h)} \ \ \text{ for all } h \neq 0.$$
From this observation, it is easy to see that we can resolve the issue of having $k \neq k'$, and moreover arrange that $l\geq 0$ which will be used in the next observation.
By Lemma \ref{basicpropertiesQ}(i,ii), across models of $\WSFQ$,  we have that 
$$  kx + t(z) \in U_{p, l}  \ \text{ if and only if } \ \bigvee_{i=1}^{p^m} kz + t(z) + ip^{l}  \in U_{p, l+m} \text{ for all } l\geq 0 \text{ and all } m. $$
Using the preceding two observations we resolve the issue of having $l<l'$. The statement of the lemma for this case then follows from the second paragraph.

We now prove the full lemma. It suffices to consider the case where $\varphi(x,z)$ is a conjunction of atomic formulas. In view of the preceding paragraph, we reduce further to the case where $\varphi(x,z)$ is of the form 
$$ \bigwedge_{i=1}^m kx+t_i(z) \notin U_{p,l_i}  $$
We now show that $\exists x\varphi(x,z)$ is a tautology over $\WSFQ$ and thus complete the proof. Suppose $(G; \sU^G, \sP^G) \models \WSFQ $ and $c \in G^n$. It suffices to find $a \in G$ such that the $p$-condition  $ka+t_i(c) \notin U^G_{p,l_i}$ holds for all $i \in \{1, \ldots, m\}$ . Without loss of generality, we assume that $t_1(c), \ldots, t_{m'}(c)$ are not in $U^G_{p,l}$ for all $l$ and that  $t_{m'+1}(c), \ldots, t_{m}(c)$ are in $U^G_{p,l_0}$ for some $l_0 $ such that $l_0 < l_i$ for all $i \in \{1, \ldots, m\}$. Using \ref{basicpropertiesQ}(ii), choose $a$ such that $ka \in U^G_{p,l_0-1} \setminus  U^G_{p,l_0}$. It follows from Lemma \ref{basicpropertiesQ}(i) that $a$ is as desired. 
\end{proof}

\begin{thm} \label{QE}
The theories $\TSFZ$, $\TSFQ$, and $\TOSFQ$ admit quantifier elimination.
\end{thm}
\begin{proof}
As the three situations are very similar, we will only present here the proof that $\TOSFQ$ admits quantifier elimination. The proof for  $\TSFZ$ and $\TSFQ$ are simpler as there is no ordering involved. Along the way we point out the necessary modifications needed to get the proof for  $\TSFZ$ and $\TSFQ$. Fix $\TOSFQ$-models  $(G; <, \sU^G, \sP^G)$ and $(H; <, \sU^H, \sP^H)$ such that the latter is $|G|^+$-saturated. Suppose 
$$f \text{ is a partial } L_{\mathrm{ou}}^*\text{-embedding from } (G; <, \sU^G, \sP^G) \text{ to } (H; <, \sU^H , \sP^H ), $$ in other words, $f$ is an $L_{\mathrm{ou}}^*$-embedding of an $L_{\mathrm{ou}}^*$-substructure of $(G; <, \sU^G, \sP^G)$ into $(H; <, \sU^H , \sP^H)$. 
By  a standard test, it suffices to show that  if $\text{Domain}(f) \neq G$, then there is a partial $L_{\mathrm{ou}}^*$-embedding from $(G; <, \sU^G, \sP^G)$  to $(H; <, \sU^H, \sP^H)$ which properly extends $f$. For the corresponding statements with $\TSFZ$ or $\TSFQ$, we need to consider instead $(G; \sU^G, \sP^G)$ and $(H; \sU^H, \sP^H)$  depending on the situation.

We remind the reader that our choice of language includes a symbol for additive inverse, and so $\text{Domain}(f)$ is automatically a subgroup of $G$. Suppose $\text{Domain}(f)$ is not a pure subgroup of $G$, that is, there is an element $\text{Domain}(f)$ which is $n$-divisible in $G$ but not $n$-divisible in $\text{Domain}(f)$ for some $n>0$.   Then there is prime $p$ and $a$ in $G \setminus \text{Domain}(f)$ such that $pa \in \text{Domain}(f) $. Using divisibility of $H$, we get $b \in H$  such that $pb = f(pa)$. Let $g$ be the extension of $f$ given by
$$ ka + a' \mapsto kb + f(a') \quad \text { for }  k\in \{ 1, \ldots, p-1\} \text{ and } a' \in \text{Domain}(f). $$
It is routine to check that $g$ is an ordered group isomorphism from $\la \text{Domain}(f), a\ra$ to $\la \text{Image}(f), b\ra$.  It is also easy to check using Lemma \ref{basicpropertiesQ}(iii) that $ka + a'  \in U^G_{p',l}$ if and only if $kb + f(a') \in U^G_{p',l}$ and $ka + a'  \in P^G_{m}$ if and only if $kb + f(a') \in U^G_{m}$ for all $k$, $l$, $m$, primes $p'$, and $a' \in \text{Domain}(f)$. Hence,
$$g \text{ is a partial } L_{\mathrm{ou}}^*\text{-embedding } \text{ from } (G; <, \sU^G, \sP^G) \text{ to } (H; <, \sU^H , \sP^H ). $$
Clearly, $g$ properly extends $f$, so the desired conclusion follows. The proof for $\TSFQ$ is the same but without the verification that the ordering is preserved. The situation for $\TSFZ$ is slightly different as $H$ is not divisible. However, for all primes $p'$, $p'a$ is in  $p'G = U^G_{p', 1}$, and so $f(p'a)$ is in $ U^H_{p', 1} = p'H$. The proof proceeds similarly using \ref{basicpropertiesZ}(4-6).

The remaining case is when $\text{Domain}(f) \neq G$ is a pure subgroup of $G$. Let $a$ be in $G \setminus \text{Domain}(f)$. We need to find $b$ in $H \setminus\text{Image}(f)$ such that $$\text{qftp}_{L_{\mathrm{ou}}^*}(a \slash \text{Domain}(f)) = \text{qftp}_{L_{\mathrm{ou}}^*}(b \slash \text{Image}(f)). $$ By the fact that $\text{Domain}(f)$ is pure in $G$, and Corollary \ref{SimplifytoG2}, $\text{qftp}_{L_{\mathrm{ou}}^*}(a \mid \text{Domain}(f))$ is isolated by formulas of the form
$$ \rho(b) \wedge \lambda(x,b) \wedge \psi(x, t(b), t'(b)) $$
where $\rho(y)$ is a quantifier-free $L^*_{\mathrm{ou}}$-formula, $\lambda(x,y)$ is an order condition,  $\psi(x, z, z')$ a special formula, $t(y)$ and $t'(y)$ are tuples of $L_{\mathrm{ou}}^*$-terms of suitable length,  $b$ is a tuple of elements of $\text{Domain}(f)$ of suitable length, and $\psi(x, t(b), t'(b))$ is a nontrival $\text{Domain}(f)$-system. As $\text{Domain}(f)$ is a pure subgroup of $G$, we can moreover arrange that $\lambda(x,b)$ is simply the formula $b_1 < x <b_2$. Since $f$ is an $L_{\mathrm{ou}}^*$-embedding, $\rho(f(b))$ holds,  $f(b_1) < f(b_2)$, and $\psi\big(x, t(f(b)), t'(f(b))\big) $ is a nontrivial $\text{Image}(f)$-system. Using the fact that $(H; <, \sU^H , \sP^H )$ is $|G|^+$-saturated, the problem reduces to showing that 
$$\psi\Big(x, f\big(t(b)\big), f\big(t'(b)\big)\Big) \ \text{ has a solution in the interval } (f(b_1), f(b_2))^H.$$ As $\psi(x, t(b), t'(b))$ is satisfiable in $G$, it is locally satisfiable in $G$ by Lemma \ref{GlobaltoLocal}. For each $p$, let $\psi_p(x, z, z')$ be the  associated $p$-condition of $\psi(x, z, z')$. By Lemma \ref{localQE}, for all $p$, the formula $\exists x \psi_p(x, z, z')$ is equivalent modulo $\WSFQ$ to a quantifier free formula in $L^*_{\mathrm{u}}(z, z')$. Hence, $\exists x \psi_p\Big(x, f\big(t(b)\big), f\big(t'(b)\big)\Big)$ holds in $(H;<, \sU^H, \sP^H)$ for all $p$. Thus, $$\text{the } \text{Image}(f)\text{-system }\psi\Big(x, f\big(t(b)\big), f\big(t'(b)\big)\Big)  \text{ is locally satisfiable in } H.$$ The desired conclusion follows from the genericity of $(H; <, \sU^H, \sP^H)$.  The proofs for $\TSFZ$ and $\TSFQ$ are similar. However,  we have there the formula  $\bigwedge_{i =1}^k x \neq b_i$ with $k \leq |b|$ instead of the formula $b_1< x< b_2$, Lemma \ref{SimplifytoG} instead of Corollary \ref{SimplifytoG2}, and the corresponding notion of genericity instead of the current one. 
\end{proof}

\begin{cor} \label{Completeness}
The theory $\TSFZ$ is a recursive axiomatization of $\mathrm{Th}(\zz; \sU^\zz, \sP^\zz)$, and is therefore decidable. Similar statements hold for  $\TSFQ$ in relation to $\mathrm{Th}(\qq; \sU^\qq, \sP^\qq)$ and $\TOSFQ$ in relation to $\mathrm{Th}(\qq; < \sU^\qq, \sP^\qq)$.
\end{cor}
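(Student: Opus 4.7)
The plan is to combine the quantifier elimination result of Theorem \ref{QE} with the observation that each of the three theories has a canonical substructure common to all its models; completeness follows by a standard argument, and decidability is then immediate since each theory has been arranged to be recursive.

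For $T_{1,\zz}$, I would first verify that $(\zz; \sU^\zz, \sP^\zz)$ is itself a model of $T_{1,\zz}$: it satisfies $T^-_{1,\zz}$ by construction of the axioms in Section 2, and it is generic by Theorem \ref{genericity}. Next, in any model $(G; \sU^G, \sP^G)$ of $T_{1,\zz}$, the $L_1^*$-substructure generated by the constant $1$ is isomorphic to $\zz$ as an abelian group (since $L_1^*$ provides no division), and by Lemma \ref{basicpropertiesZ}(2) the restrictions of $U^G_{p,l}$ and $P^G_m$ to this copy of $\zz$ agree with $U^\zz_{p,l}$ and $P^\zz_m$; hence the induced $L_1^*$-substructure is isomorphic to $(\zz; \sU^\zz, \sP^\zz)$. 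Given any $L_1^*$-sentence $\varphi$, Theorem \ref{QE} produces a quantifier-free sentence $\psi$ equivalent to $\varphi$ modulo $T_{1,\zz}$, and the truth value of $\psi$ depends only on this common substructure; thus $\varphi$ has the same truth value in every model, so $T_{1,\zz}$ is complete. Since $T_{1,\zz}$ is recursively axiomatized and complete, it is decidable.

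For $T_{1,\qq}$ and $T_{2,\qq}$, the argument is the same, with Lemma \ref{basicpropertiesQ}(2) playing the role of Lemma \ref{basicpropertiesZ}(2): the $L_1^*$-substructure generated by $1$ in any model is again a copy of $\zz$, and its $L_1^*$-reduct matches $(\zz; \sU^\zz, \sP^\zz)$. For $T_{2,\qq}$ we additionally note that the induced order on this copy of $\zz$ is the standard one in every model, so the common-substructure argument carries through in the richer language $L_2^*$. One should also check that $(\qq; \sU^\qq, \sP^\qq)$ and $(\qq; <, \sU^\qq, \sP^\qq)$ are models of $T_{1,\qq}$ and $T_{2,\qq}$ respectively, which is again immediate from Theorem \ref{genericity}.

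The main (and really only) ingredient to invoke is the ``unary predicates restrict correctly to $\zz$'' property provided by Lemmas \ref{basicpropertiesZ} and \ref{basicpropertiesQ}; once this is in hand the rest is routine bookkeeping, so I do not anticipate any serious obstacle beyond ensuring that the substructure generated by $1$ really is captured by $\zz$ with its canonical $L_1^*$-interpretation.
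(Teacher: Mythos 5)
Your proof is correct and follows essentially the same strategy as the paper's: use quantifier elimination (Theorem \ref{QE}) together with the observation, via Lemma \ref{basicpropertiesZ}(2) (resp.\ Lemma \ref{basicpropertiesQ}(2)), that the copy of $\zz$ sitting inside every model carries the standard interpretation of the predicates, so quantifier-free sentences have a fixed truth value across models; the paper phrases the canonical substructure as the relative divisible closure of $1$ rather than the $L_1^*$-substructure generated by $1$, but both choices work and the argument is otherwise identical.
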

\begin{proof}
By Lemma \ref{basicpropertiesZ}(ii), the  subgroup generated by $1$ in an arbitrary model $(G; \sU^G, \sP^G)$ of $\TSFZ$ is an isomorphic copy of $(\zz; \sU^\zz, \sP^\zz)$. Hence by Theorem \ref{QE}, $\TSFZ$ is complete, and  on the other hand $(\zz; \sU^\zz, \sP^\zz) \models \TSFZ $ by Theorem \ref{genericity}. The first statement of the corollary  follows.  The justification of the second statement is obtained in a similar fashion.
\end{proof}

\begin{proof}[Proof of Theorem \ref{structure1}, part 1] We show that  the $L_{\mathrm{u}}$-theory of $(\zz; \SF)$ is model complete and decidable. For all $p$, $l\geq 0$, $m>0$, and all $a \in \zz$, we have the following:
\begin{enumerate}
\item $a \in U^\zz_{p, l}$ if and only there is $b\in \zz$ such that $p^lb =a$;
\item $a \notin U^\zz_{p, l}$ if and only if for some $i \in \{1, \ldots, p^l-1\}$, there is $b\in \zz$ such that $p^lb =a+i$;
\item $a \in P^\zz_m$ if and only if for some $d \mid m$, there is $b\in \zz$ such that $a =bd$ and $b\in \SF$;
\item  $a \notin P^\zz_m$ if and only if for all $d \mid m$, either for some $i \in \{1, \ldots, d-1\}$, there is $b\in \zz$ such that $db =a+i$ or there is $b \in \zz$ such that $a =bd$ and $b \notin \SF$.
\end{enumerate}
As $(\zz; \sU^\zz, \sP^\zz) \models \TSFZ$, it then follows from Theorem \ref{QE} and the above observation that every $0$-definable set in $(\zz, \SF)$ is existentially $0$-definable. Hence, the theory of $(\zz; \SF)$ is model complete. The decidability of  $\text{Th}(\zz; \SF)$ is immediate from the preceding corollary.
\end{proof}

\begin{lem}
Suppose $a\in \qq$ has $v_p(a)<0$. Then there is $\varepsilon \in \qq$ such that $v_p(\varepsilon) \geq 0$ and $a+\varepsilon \in \SQ$.
\end{lem}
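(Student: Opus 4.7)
The plan is to take $\varepsilon$ to be an integer, for which $v_p(\varepsilon) \geq 0$ is automatic. Write $a = u/v$ in lowest terms with $v > 0$; since $v_p(a) < 0$ we have $p \mid v$, and $\gcd(u, v) = 1$. For $n \in \zz$, the expression $a + n = (u + nv)/v$ remains in lowest terms because $\gcd(u + nv, v) = \gcd(u, v) = 1$. A prime-by-prime inspection then shows that $a + n \in \SQ$ if and only if the integer $u + nv$ is square-free, equivalently $u + nv \in \SF$: for $q \mid v$ we have $v_q(a + n) = -v_q(v) \leq 0$, while for $q \nmid v$ the condition reads $q^2 \nmid u + nv$.

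It thus suffices to find $n \in \zz$ with $vn + u \in \SF$. Consider the special formula $\psi(x, z) \in L_1^*(x, z)$ given by $(vx + z \in P_1)$, corresponding to the parameter choice $(v, 1, \Theta)$ with every $\theta_p$ trivial; then $\psi(x, u)$ is a $\zz$-system. The associated $q$-condition is $\psi_q(x, z) = (vx + z \notin U_{q, 2})$, and satisfiability of $\psi_q(x, u)$ by $a_q \in \zz$ amounts to $q^2 \nmid v a_q + u$. If $q \mid v$, then $v a_q + u \equiv u \pmod{q}$ and coprimality of $u$ and $v$ gives $q \nmid v a_q + u$, so any $a_q$ works; if $q \nmid v$, then $v$ is a unit modulo $q^2$, so $v a_q + u$ runs through every residue modulo $q^2$ as $a_q$ varies, and we may pick $a_q$ avoiding $0$. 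Hence $\psi(x, u)$ is locally satisfiable in $\zz$, and applying genericity of $(\zz; \sU^\zz, \sP^\zz)$ from Theorem \ref{genericity} yields the required $n$.

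The main subtlety is recognizing that the lemma is captured by an exceedingly simple $\zz$-system, which turns the entire content into a routine local-satisfiability check; the genuinely nontrivial ingredient, namely the existence of square-free values in suitable arithmetic progressions, is already absorbed into Theorem \ref{genericity} and need not be revisited here.
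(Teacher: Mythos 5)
Your proof is correct and follows essentially the same route as the paper's: reduce to finding a square-free integer in a suitable arithmetic progression, verify local satisfiability at each prime using the coprimality of numerator and denominator, and invoke the genericity of $(\zz; \sU^\zz, \sP^\zz)$ (which rests on Lemma~\ref{positivedensity}, exactly what the paper cites directly). The one mild simplification you make is to take $\varepsilon$ to be an integer $n$ with $a+n = (u+nv)/v$ still in lowest terms, rather than the paper's parametrization $a = m/(np^k)$ with $\varepsilon = b/n$; this is a cosmetic improvement that streamlines the prime-by-prime check but does not change the substance of the argument.
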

\begin{proof}
Suppose $a$ is as stated. If $a \in \SQ$ we can choose $\varepsilon =0$, so suppose $a$ is in $\qq \setminus \SQ$. We can also arrange that $a>0$. Then there are $m, n, k \in \nn^{\geq 1}$ such that 
$$a = \frac{m}{np^k},\  (m, n)=1,\  (m, p)=1, \text{ and } (n, p)=1.$$ 
It suffices to show there is $b \in \zz$ such that $m+ p^kb$ is a square-free integer as then
$$a+ \frac{b}{n} = \frac{m+ p^kb}{np^k} \in \SQ.$$
For all prime $l$,  $p^kb_l+m \notin U^\qq_{l,2}$ for $b_l=0$ or $1$.  The  conclusion then follows from the genericity of $( \zz; \sU^\zz, \sP^\zz)$ as established in Theorem \ref{genericity}.
\end{proof}

\begin{cor}\label{Uni}
For all $p$ and $l$, $U^\qq_{p, l}$ is universally $0$-definable in $(\qq, \SQ)$.
\end{cor}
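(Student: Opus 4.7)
The strategy is to show the complement of $U^\qq_{p,l}$ is existentially $0$-definable in $(\qq;\SQ)$; negating such a definition then yields a universal $0$-definition of $U^\qq_{p,l}$. First I reduce to the case $l=0$. For $l \leq 0$, the condition $v_p(a) \geq l$ is equivalent to $v_p(p^{-l}a) \geq 0$, and $p^{-l}a$ is an $L_0$-term, so any universal $0$-definition of $U^\qq_{p,0}$ pulls back along this term. For $l > 0$, multiplication by $p^l$ is a bijection on $\qq$, so
$$a \in U^\qq_{p,l}\ \iff\ \forall b\,\bigl(p^l b = a \to b \in U^\qq_{p,0}\bigr);$$
a universal $0$-definition $\forall \bar{y}\,\psi_0(x,\bar{y})$ of $U^\qq_{p,0}$ then yields one of $U^\qq_{p,l}$ by prenexing the implication under $\forall b\,\forall \bar{y}$.

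For $l=0$, I claim the complement $\{a \in \qq : v_p(a) < 0\}$ is defined by
$$\chi(a)\ :=\ \exists \varepsilon\,\bigl(\varepsilon \in U^\qq_{p,0}\ \wedge\ p^2(a+\varepsilon) \in \SQ\bigr).$$
For the $(\Leftarrow)$ direction, i.e.\ $v_p(a) < 0 \Rightarrow \chi(a)$, the preceding lemma furnishes $\varepsilon \in \qq$ with $v_p(\varepsilon) \geq 0$ and $a+\varepsilon \in \SQ$; the ultrametric inequality gives $v_p(a+\varepsilon) = v_p(a) < 0$, so $v_p(p^2(a+\varepsilon)) = 2 + v_p(a+\varepsilon) < 2$ while $v_q(p^2(a+\varepsilon)) = v_q(a+\varepsilon) < 2$ for every $q \neq p$, whence $p^2(a+\varepsilon) \in \SQ$. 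For $(\Rightarrow)$, the assumption $p^2(a+\varepsilon) \in \SQ$ forces $v_p(a+\varepsilon) < 0$; combined with $v_p(\varepsilon) \geq 0$, the ultrametric property yields $v_p(a) = v_p(a+\varepsilon) < 0$.

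Since $\varepsilon \in U^\qq_{p,0}$ is itself existentially $0$-definable by Lemma \ref{Definabilityofvaluation}, rolling its witnesses into the outer $\exists$ makes $\chi$ existentially $0$-definable. Hence $U^\qq_{p,0}$ is universally $0$-definable, and the reduction above extends this to every $l$. The substantive step is really the preceding lemma; the present corollary just observes that, among elements of $\SQ$, the condition $v_p(\cdot) < 0$ is captured by the $L_1$-atomic condition $p^2(\cdot) \in \SQ$, which lets one detect negative $p$-adic valuation existentially using only the predicate $\SQ$.
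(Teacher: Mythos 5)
Your proof is correct and follows essentially the same route as the paper: reduce to $l = 0$, then existentially define the complement $\{a : v_p(a) < 0\}$ via the preceding lemma by producing a witness $\varepsilon$ with $v_p(\varepsilon) \geq 0$ and $p^2(a+\varepsilon) \in \SQ$, and finally fold in the existential definition of $U^\qq_{p,0}$ from Lemma \ref{Definabilityofvaluation}. The only cosmetic differences are that your $\chi$ omits the conjunct $a+\varepsilon \in \SQ$ that the paper carries (harmless, since $p^2(a+\varepsilon) \in \SQ$ alone already forces $v_p(a+\varepsilon) < 0$), and you split the $l$-reduction into the cases $l\leq 0$ and $l>0$ rather than rescaling the complement by $p^n$ as the paper does.
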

\begin{proof}
We will instead show that $\qq\setminus U^\qq_{p, l} = \{ a : v_p(a) < l \}$ is existentially $0$-definable for all $p$ and $l$. As $\qq \setminus U^\qq_{p, l+n} = p^n( \qq \setminus U^\qq_{p, l})$ for all $p$, $l$, and $n$, it suffices to show the statement for  $l=0$. Fix a prime $p$. 
By the preceding lemma we have that  for all $ a$, $v_p(a)<0$ if and only if  
$$ \text{ there is } \varepsilon \text{ such that } v_p(\varepsilon) \geq 0,  
 a+\varepsilon \in \SQ \text{ and } v_p(a+\varepsilon) <0.$$ 
We recall that $\{\varepsilon : v_p(\varepsilon) \geq 0 \}$ is existentially $0$-definable by Lemma \ref{Definabilityofvaluation}. Also, for all $a' \in \SQ$, we have that $v_p(a')< 0$ is equivalent to $p^2a' \in \SQ$. The conclusion hence follows.
\end{proof}

\begin{proof}[Proof of Theorem 1.3 and 1.4, part 1] We show that the $L_{\mathrm{u}}$-theory of $(\qq; \SQ)$ and the $L_{\mathrm{ou}}$-theory of $(\qq; <, \SQ )$ are model complete and decidable. The proof is almost exactly the same as that of part 1 of Theorem 1.1. It follows from Lemma \ref{Definabilityofvaluation} and Corollary \ref{Uni} that for all $p$ and $l$, the sets $U^\qq_{p, l}$ are existentially and universally $0$-definable in $(\qq; \SQ)$.  For all $m$, $P^\qq_m = m\SQ$ and $\qq \setminus P^\qq_m = m( \qq \setminus \SQ)$ are clearly existentially $0$-definable. The conclusion follows. 
\end{proof}

\noindent Next, we will show that the $L_{\mathrm{ou}}$-theory of $(\zz; <, \SF)$ is bi-interpretable with arithmetic. The proof follow closely the arguments from \cite{BJW}. In fact, we can slightly modify Corollary \ref{Consecutivesquarefree}  to use essentially the same proof at the cost of replacing $n^2$ with $n^2+n$. 
\begin{lem}
Let $c_1, \ldots, c_n$ be an increasing sequence of natural numbers, assume that for all primes $p$, there is a solution to the system of congruence inequations
$$ x + c_i \notin U^\zz_{p, 2} \text{ for all } i \in \{1, \ldots, n\}.$$ Then there is $a \in \nn$ such that $a+c_1, \ldots, a+c_n$ are consecutive square-free integers.
\end{lem}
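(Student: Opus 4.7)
The plan is to encode the conclusion as a single $\zz$-system in the sense of Section 2 and then invoke Theorem~\ref{genericity}. Set
$$J \;=\; \{\, k \in \nn : c_1 < k < c_n \text{ and } k \neq c_i \text{ for all } i\,\};$$
this is a finite set, disjoint from $\{c_1,\ldots,c_n\}$. To say that $a+c_1,\ldots,a+c_n$ are consecutive square-free integers is exactly to say $a+c_i \in \SF$ for every $i\in\{1,\ldots,n\}$ together with $a+k \notin \SF$ for every $k \in J$, since the latter clauses rule out any other square-free integer in the interval $(a+c_1, a+c_n)$.

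With this reformulation in mind, I would consider the special formula $\psi \in L_1^*(x,z,z')$ corresponding to the parameter choice $(k,m,\Theta)=(1,1,\Theta)$ with every $\theta_p$ trivial, namely
$$\psi(x,z,z') \;=\; \bigwedge_{i=1}^{n}(x+z_i \in P_1)\;\wedge\;\bigwedge_{\ell=1}^{|J|}(x+z'_\ell \notin P_1).$$
Let $c=(c_1,\ldots,c_n)$ and let $c'\in\zz^{|J|}$ enumerate $J$. Because the $c_i$'s are strictly increasing and $J\cap\{c_1,\ldots,c_n\}=\emptyset$, the components of $c$ and $c'$ are pairwise distinct, so $\psi(x,c,c')$ is a genuine $\zz$-system in the sense of Section 2. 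Any solution $a\in\zz$ to this $\zz$-system satisfies exactly the two families of conditions identified in the previous paragraph, so it only remains to show that $\psi(x,c,c')$ has a solution that lies in $\nn$.

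The final step is to verify local satisfiability and then apply genericity. Unpacking the definition of the associated $p$-condition with $k=1$, $m=1$, $v_p(m)=0$ and $\theta_p$ trivial, we obtain
$$\psi_p(x,c,c') \;=\; \bigwedge_{i=1}^{n}(x+c_i \notin U_{p,2}),$$
which is \emph{verbatim} the system assumed solvable in the hypothesis of the lemma. Hence $\psi(x,c,c')$ is $p$-satisfiable in $\zz$ for every prime $p$, i.e.\ locally satisfiable. By Theorem~\ref{genericity}, $(\zz;\sU^\zz,\sP^\zz)$ is generic, so $\psi(x,c,c')$ has infinitely many solutions in $\zz$; choosing one in $\nn$ yields the required $a$. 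There is no substantial obstacle here beyond bookkeeping — the mathematical weight has already been absorbed into Theorem~\ref{genericity}, and the only thing to check is that the negative conjuncts ``$x+k\notin P_1$'' do not contribute to the $p$-condition, so that the hypothesis of the lemma is precisely what is needed.
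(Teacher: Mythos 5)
Your reformulation of the target condition (that $a+c_1,\ldots,a+c_n$ be consecutive square-free integers) as the $\zz$-system $\psi(x,c,c')$ is exactly right, and you correctly check that with $k=1$, $m=1$, and all $\theta_p$ trivial the associated $p$-condition is verbatim the hypothesis of the lemma; this is the same reformulation the paper uses. The gap is in the very last step. Theorem~\ref{genericity} only tells you that the $\zz$-system has infinitely many solutions \emph{in $\zz$}, and you then declare that you may ``choose one in $\nn$.'' That inference does not follow from the stated theorem: an infinite subset of $\zz$ need not meet $\nn$, and the definition of genericity says nothing about where the solutions live. Nor is there a cheap symmetry fix --- the reflection $a \mapsto -a$ turns the conjunct $a+c_i\in\SF$ into $a-c_i\in\SF$, which is a different system unless the $c_i$'s happen to be a symmetric configuration. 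The paper avoids this by appealing to Lemma~\ref{positivedensity} directly rather than to Theorem~\ref{genericity}: taking $h=1$, $s=0$, and $t$ large, the lower bound $|\Psi^1(0,t)| \geq \varepsilon t - O(\sqrt{t}) + C$ is eventually positive, so the system has a solution in the interval $(0,t)$, hence in $\nn$. Replacing your appeal to Theorem~\ref{genericity} with this direct application of Lemma~\ref{positivedensity} closes the gap; the rest of your argument --- the identification of the set $J$, the setup of the $\zz$-system, and the match between the $p$-conditions and the hypothesis --- is correct.
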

\begin{proof}
Suppose $c_1, \ldots, c_n$ are as given. Let $c'_1, \ldots, c'_{n'}$ be the listing in increasing order of elements in the set of  $c \in \nn$ such that $c_1 \leq c \leq c_n$ and $c \neq c_i$ for $i \in \{1, \ldots, n\}$. The conclusion that there are infinitely many $a$ such that 
$$  \bigwedge_{i=1}^n (a+c_i \in \SF) \wedge \bigwedge_{i=1}^{n'} (a+c'_i \notin \SF) $$
follows from the assumptions about $c_1, \ldots, c_n$ and the genericity of $( \zz; \sU^\zz, \sP^\zz)$ as established in Theorem \ref{genericity}.
\end{proof}
\begin{cor} \label{Consecutivesquarefree}
For all $n\in \nn^{>0}$, there is $a \in \nn$ such that $a+1, a+4, \ldots, a+n^2$ are consecutive square-free integers .
\end{cor}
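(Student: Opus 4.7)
\medskip
\noindent\textbf{Proof proposal.} The plan is to apply the preceding lemma to an appropriately chosen increasing sequence $c_1 < c_2 < \cdots < c_n$ drawn from $\{1, 2, \ldots, n^2\}$; the lemma will then produce $a \in \nn$ making $a+c_1, \ldots, a+c_n$ into $n$ consecutive square-free integers lying inside the block of length $n^2$ starting at $a+1$.

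The only hypothesis of the lemma to check is that for every prime $p$ the system $x + c_i \notin U^\zz_{p,2}$ ($1 \leq i \leq n$) has a solution, or equivalently, that the image of $\{c_1, \ldots, c_n\}$ in $\zz/p^2\zz$ is a proper subset. This is automatic whenever $p > \sqrt{n}$, since the image then has at most $n < p^2$ elements, so only the finitely many primes $p \leq \sqrt{n}$ impose real constraints. For each such $p$ I would fix some forbidden residue $r_p \in \zz/p^2\zz$ and let $G \subseteq \{1, \ldots, n^2\}$ be the set of integers avoiding $r_p$ mod $p^2$ for every prime $p \leq \sqrt{n}$. A standard CRT/sieve count yields
\[
|G| \;\geq\; n^2 \prod_{p \leq \sqrt{n}} \bigl(1 - p^{-2}\bigr) \,-\, O(1) \;\geq\; \tfrac{6}{\pi^2}\, n^2 \,-\, O(1),
\]
which exceeds $n$ for $n \geq 2$; the case $n = 1$ is immediate.

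Selecting any $n$ elements of $G$ as $c_1 < \cdots < c_n$, the local solvability is now clear: for $p \leq \sqrt{n}$ take $x \equiv -r_p \pmod{p^2}$, and for $p > \sqrt{n}$ pick any $x$ outside the at most $n < p^2$ forbidden residues $-c_i \pmod{p^2}$. An application of the preceding lemma then delivers the desired $a$.

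The principal obstacle is the sieve step just above: verifying that simultaneously avoiding one chosen residue class modulo $p^2$ for each prime $p \leq \sqrt{n}$ leaves at least $n$ candidates among $\{1, \ldots, n^2\}$. This is precisely where the quadratic length $n^2$ appearing in the statement is used; shrinking the interval further would require a more careful choice of the residues $r_p$.
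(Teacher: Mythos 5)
Your proposal starts from a misreading of the corollary. The list ``$a+1, \ldots, a+n^2$'' does not mean an arbitrary $n$-element subset of the block of length $n^2$: it is the $n$-term sequence $a+1^2, a+2^2, \ldots, a+n^2$, i.e.\ the lemma is to be applied with the \emph{fixed} offsets $c_i = i^2$. You can see this from how the corollary is invoked in the proof of Theorem \ref{structure2}: there one needs pairs $(a, a+n^2) \in T$ for every $n$, where membership in $T$ requires that $a+1$ and $a+4$ be consecutive square-free integers and that the second differences of the gaps all equal $2$ --- exactly the pattern of $a+i^2$. Because you allow yourself to choose $c_1 < \cdots < c_n$ freely inside $\{1,\ldots,n^2\}$, your conclusion (that some $n$ consecutive square-free integers lie in a window of length $n^2$) is genuinely weaker and does not yield the set $T$ needed later: your $c_i$ will in general not be perfect squares, and you have no control over $c_1$ and $c_n$ being $1$ and $n^2$.

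Once the intended reading is in place the argument is shorter and needs no sieve. With $c_i = i^2$, the hypothesis of the preceding lemma for a prime $p$ is that some residue $x \bmod p^2$ avoids all of $-1^2, -2^2, \ldots, -n^2 \pmod{p^2}$. But the set $\{ -i^2 \bmod p^2 : i \geq 1\}$ is contained in the image of the squaring map on $\zz/p^2\zz$ (up to sign), and this image is a \emph{proper} subset of $\zz/p^2\zz$ for every prime $p$: for odd $p$ it has $p(p-1)/2 + 1 < p^2$ elements, and for $p=2$ it is $\{0,1\} \subsetneq \zz/4\zz$. So a valid $x$ exists for every $p$, regardless of $n$, and the lemma applies directly. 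This is the observation your proposal is missing: the local solvability for the squares is automatic and uniform in $p$, with no counting or CRT needed, precisely because squares never exhaust a residue system modulo $p^2$.
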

\begin{proof}
For each $p$, we can obtain $a\in \{1,2,\ldots,p^2-1\} $ such that 
$$a\nequiv_{p^2}-m^2 \text{ for all } m.$$ Hence, for any given $n>0$ and $p$, the $p$-condition $\bigwedge_{i=1}^{n}(x+i^2\notin U^{\zz}_{p,2})$ has a solution. The result now follows immediately  from the preceding lemma.
\end{proof}

\begin{proof}[Proof of Theorem \ref{structure2}]
It suffices to show that $(\zz; <, \SF)$ interprets multiplication on $\nn$. 
Let $T$ be the set of $(a, b) \in \nn^2$ such that for some $n \in \nn^{\geq 1},$ 
$$ b= a+n^2 \text{ and } a+1, a+4, \ldots, a+n^2 \text{ are consecutive square-free integers}. $$
\noindent The set $T$ is definable in $(\zz; <, \SF)$ as $(a, b) \in T$ and $b\neq a+1$ if and only if  $a+4\leq b$, $a+1$ and $a+4$ are consecutive square-free integers, $b$ is square-free, and whenever $c$, $d$, and $e$ are consecutive square-free integers with $a< c <d< e \leq b$, we have that $$(e-d)-(d-c) =2.$$ Let $S$ be the set $\{ n^2 : n \in \nn\}$. If $c=0$ or there are $a, b$ such that $(a, b) \in T$ and $b-a =c$, then $c=n^2$ for some $n$. Conversely, if $c  =n^2$, then either $c=0$ or by Corollary \ref{Consecutivesquarefree}, 
$$ \text{ there is } (a,b) \in T \text{ with } b-a =c.$$ 
Therefore, $S$ is definable in  $(\zz; <, \SF)$. The map $n \mapsto n^2$ in $\nn$ is definable in $(\zz; <, \SF)$ as $b =a^2$ if and only if $b\in S$ and whenever $c\in S$ is such that $c>b$ and $b, c$ are consecutive in $S$, we have that $c-b = 2a+1$. Finally, $c =ba$ if and only if $2c= (b+a)^2-b^2-a^2$. Thus, multiplication on $\nn$ is definable in $(\zz; <, \SF)$.
\end{proof}
\section{Combinatorial Tameness}

\noindent As the theories $\TSFZ$, $\TSFQ$, and $\TOSFQ$ are complete, it is convenient to work in the so-called monster models, that is, models which are very saturated and homogeneous. Until the end of the paper, let $(\gg; \sU^\gg, \sP^\gg)$ be a monster model of either  $\TSFZ$ or $\TSFQ$ depending on the situation. In the latter case, we suppose $(\gg; <, \sU^\gg, \sP^\gg)$ is a monster model of $\TOSFQ$. We assume that $\kappa, A$ and $I$ have
small cardinalities compared to $\gg$.

\medskip
\noindent Our general strategy to prove the tameness of $\TSFZ$, $\TSFQ$, and $\TOSFQ$ is to link them to the corresponding ``local'' facts. The next lemma  says that  $\TSFZ$ is ``locally'' supersimple of U-rank $1$.

\begin{lem}\label{localUrank1}
Suppose $(\gg; \sU^\gg, \sP^\gg) \models \TSFZ$, $\theta_p(x, y)$ is a consistent $p$-condition, and $b$ is in $\gg^{|y|}$. Then $\theta_p(x, b)$ does not divide over any base set $A\subseteq \gg$.
\end{lem}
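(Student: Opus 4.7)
The plan is to observe that a $p$-condition is ``local'' in a very strong sense: its truth on $\gg$ is entirely determined by finitely much $\mathrm{mod}\ p^L$ information about its parameters for $L$ large enough, and this $\mathrm{mod}\ p^L$ information is $0$-definable. Non-dividing will then be immediate, because every $b_i$ in an $A$-indiscernible sequence with $b_0 = b$ has the same $\mathrm{mod}\ p^L$ residues as $b$, and so $\theta_p(x, b_i)$ defines literally the same subset of $\gg$ as $\theta_p(x, b)$.

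\medskip
First, I would fix $L$ to be the largest $l$ occurring in any atomic subformula $t(x, y) \in U_{p, l}$ of $\theta_p(x, y)$. Each such atomic formula has the form $k x + h \cdot y \in U_{p, l}$ with $k \in \zz$, $h \in \zz^{|y|}$, and $l \leq L$. By clause (2) of $T_{1, \zz}^-$, $U^\gg_{p, L} = p^L \gg$; combined with $\gg \equiv_{L_0} \zz$ and the presence of the distinguished constant $1$, this yields a canonical group isomorphism $\gg / p^L \gg \cong \zz / p^L \zz$. Hence whether $k x + h \cdot \tilde b \in U^\gg_{p, l}$ holds depends only on the image of $(x, \tilde b)$ in $(\zz/p^L\zz)^{1 + |y|}$, and therefore the subset of $\gg$ defined by $\theta_p(x, \tilde b)$ is a function of the image of $\tilde b$ in $(\zz/p^L\zz)^{|y|}$ alone.

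\medskip
Next, I would observe that this image is $0$-definable: for each $v \in \{0, \ldots, p^L - 1\}^{|y|}$, the quantifier-free $L_1^*$-formula $\bigwedge_{j} (y_j - v_j \in U_{p, L})$ uses no parameters beyond the constant $1$, and exactly one such $v$ is satisfied by any given tuple. Thus the image of $\tilde b$ in $(\zz/p^L\zz)^{|y|}$ is determined by $\mathrm{tp}(\tilde b/\emptyset)$, and \emph{a fortiori} by $\mathrm{tp}(\tilde b/A)$. Consequently, if $(b_i)_{i < \omega}$ is any $A$-indiscernible sequence with $b_0 = b$, then every $b_i$ shares the same residue $\mathrm{mod}\ p^L$ as $b$, and by the previous paragraph $\theta_p(x, b_i)$ defines exactly the same subset of $\gg$ as $\theta_p(x, b)$ for each $i$. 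The partial type $\{\theta_p(x, b_i) : i < \omega\}$ is therefore equivalent to the single formula $\theta_p(x, b)$, which is consistent by hypothesis; hence $\theta_p(x, b)$ does not divide over $A$.

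\medskip
The step that genuinely uses the $\zz$-specific axioms is the first one: the finiteness of $\gg/p^L\gg$ is what makes $\mathrm{mod}\ p^L$ residues into $0$-definable data and ultimately collapses the indexed conjunction to a single formula. Beyond this, the argument is entirely formal, and I anticipate no real obstacle; in particular, no input from the genericity axioms or from the $P_m$-predicates is needed, reflecting the fact that $p$-conditions alone are model-theoretically trivial in this language.
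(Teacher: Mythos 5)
Your proof is correct and takes essentially the same approach as the paper: both observe that a $p$-condition with parameters defines a set that is already $0$-definable (the paper phrases this as ``a finite union of translations of $U^\gg_{p,l}$,'' you phrase it via the finiteness of $\gg/p^L\gg$ and the $0$-definability of residues), so that along any $A$-indiscernible sequence the instances all define literally the same nonempty set. Your write-up supplies the details the paper leaves implicit, but the key idea is identical.
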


\begin{proof}
Recall that every every $p$-condition is equivalent modulo $\TSFZ$ to a formula in the language $L$ of groups, and the reduct of $\TSFZ$ to $L$ is simply $\text{Th}(\zz)$. Hence, the desired conclusion is an immediate consequence of the well-known fact that $\text{Th}(\zz)$ is superstable of $U$-rank $1$~\cite{Qo}; see for example .
\end{proof}

\begin{proof}[Proof of Theorem \ref{structure1}, part 2]
We first show that $\text{Th}(\zz; \SF)$  is supersimple of U-rank $1$;  see \cite[p. 36]{Kim} for a definition of U-rank or SU-rank. By  the fact that $(\zz; \SF)$ has the same definable sets as  $(\zz; \sU^\zz, \sP^\zz)$ and  Corollary \ref{Completeness}, we can replace $\text{Th}(\zz; \SF)$ with $\TSFZ$. Suppose $(\gg; \sU^\gg, \sP^\gg) \models \TSFZ$. Our job is to show that every $L^*_{\mathrm{u}}(\gg)$-formula $\varphi(x,b)$ which forks over a small subset $A$ of $\gg $ must define a finite set in  $\gg$.  We can easily reduce to the case that $\varphi(x,b)$ divides over $A$. Moreover, we can assume that $\varphi(x,b)$ is quantifier free by Theorem \ref{QE} which states that $(\gg; \sU^\gg, \sP^\gg)$ admits quantifier elimination. Using Lemma \ref{SimplifytoG}, we can also arrange that $\varphi(x, b)$ has the form 
$$ \rho(b) \wedge  \varepsilon(x, b) \wedge  \psi(x, t(b), t'(b))   $$
where  $\rho(y)$ is a quantifier-free formula, $\varepsilon(x,y)$ is an equational condition, $t(y)$ and $t'(y)$ are tuples of $L^*_{\mathrm{u}}$-terms with length $n$ and $n'$ respectively, and $\psi(x, z, z')$ is a special formula.

 Suppose to the contrary  that $\varphi(x,b)$ divides over $A$ but $\varphi(x,b)$ defines an infinite set in $\gg$.
From the first assumption, we get an infinite ordering $I$ and a family $(\sigma_i)_{i \in I}$ of $L^*_{\mathrm{u}}$-automorphisms of $(\gg; \sU^\gg, \sP^\gg)$ such that $(\sigma_i (b))_{i \in I}$ is indiscernible over $A$ and $\bigwedge_{i \in I} \varphi(x, \sigma_i (b))$ is inconsistent. As $\varphi(x,b)$ defines an infinite set in $\gg$,  we get from the second assumption that $\rho(b)$ holds in $\gg$,
$\varepsilon(x, b)$ defines a cofinite set in $\gg$, and $\psi(x,t(b), t'(b))$ defines an infinite hence non-empty set in $\gg$. As $(\sigma_i (b))_{i \in I}$ is indiscernible, we have that $\rho(\sigma_i (b))$ holds in $\gg$ and $\varepsilon(x, \sigma_i (b))$ defines a cofinite set in $\gg$ for all $i \in I$. Using the saturation of $\gg$, we get a finite set $\Delta \subseteq I$ such that  
$$ \theta_{\Delta}(x):= \bigwedge_{i \in \Delta}  \psi\big(x, t(\sigma_i (b)), t'(\sigma_i (b))\big)  \text{ defines a  finite set in } \gg. $$
As $\theta_{\Delta}(x)$ is a conjunction of $\gg$-systems given by the same special formula, it is easy to see that $\theta_{\Delta}(x)$ is also a $\gg$-system. 

We will show that $\theta_{\Delta}(x)$ defines an infinite set and thus obtain the desired contradiction. As $(\gg; \sU^\gg, \sP^\gg)$ is a model of $ \TSFZ$ and hence generic, it suffices to show that $\theta_{\Delta}(x)$ is non-trivial and locally satisfiable. As $\varphi(x,b)$ is consistent, $t(b)$ has no common components with $t'(b)$.
The assumption  that $(\sigma_i(b))_{i \in I}$ is indiscernible gives us that $t(\sigma_i(b))$ has no common components with $t'(\sigma_j (b))$ for all $i$ and $j$ in $I$. It follows that $\theta_{\Delta}(x)$ is non-trivial. For each $p$, let  $\psi_p(x, z, z')$  be the associated $p$-condition of $\psi(x,z, z')$. For all $p$, we have that $\psi_p(x, t(b), t(b'))$ defines a nonempty set and consequently by Lemma \ref{localUrank1}, 
$$ \bigwedge_{i \in \Delta}\psi_p\big(x, t(\sigma_i(b)), t'(\sigma_i(b))\big) \text{ defines a nonempty set in } \gg.  $$
We easily check that the above means $\theta_\Delta(x)$ is $p$-satisfiable for all $p$. Thus $\theta_{\Delta}(x)$  is locally satisfiable which completes our proof that $\text{Th}(\zz, \SF)$ has U-rank $1$.

We will next prove that $\text{Th}(\zz, \SF)$ is $k$-independent for all $k>0$;  see \cite{CPT} for a definition of $k$-independence. The proof is almost the exact replica of the proof in \cite{ShelahKaplan} except the necessary modifications taken in the current paragraph. Suppose $l>0$, $S$ is an arbitrary subset of $\{ 0, \ldots, l-1\}$. Our first step is to show that there are $a, d \in \nn$ such that for $t \in \{0, \ldots, l-1\}$, 
$$a +td \text{ is square-free }\  \text{ if and only if }\  t \text{ is in } S.$$  Let $n= |S|$ and $n' = l -n$, and let $c \in \zz^n$ be the increasing listing of elements in $S$ and $c' \in \zz^{n'}$ the increasing listing of elements in $\{0, \ldots, l-1\}\setminus S$. Choose $d = (l!)^2$. We need to find $a$ such that 
$$ \bigwedge_{i=1}^n (a+ c_id \in \SF) \wedge \bigwedge_{i=1}^{n'} (a+ c'_id \notin \SF). $$
For $p\leq l$, if $a_p \notin p^2\zz = U_{p,2}^\zz$, then $a_p + c_id \notin p^2\zz$ for all $i \in \{1, \ldots, n\}$. For $p>l$, it is  easy to see that $ 0+  c_id \notin p^2\zz$ for all  $i \in \{1, \ldots, n\}$. The desired conclusion follows from the genericity of $(\zz; \sU^\zz, \sP^\zz)$.

Fix $k>0$. We construct an explicit  $L_{\mathrm{u}}$-formula which witnesses the $k$-independence of $\text{Th}(\zz, \SF)$. Let  $y =(y_0, \ldots, y_{k-1})$ and let $\varphi(x, y)$ be a quantifier-free $L^*_{\mathrm{u}}$-formula such that for all $a \in \zz$ and $b \in \zz^k$,
$$ \varphi(a, b) \ \text{ if and only if }\ a+ b_0+\cdots+b_{k-1} \in \SF \quad\text{ where } b =(b_0, \ldots, b_{k-1}).  $$
We will show that for any given $n >0$, there are families $(a_\Delta)_{ \Delta \subseteq \{ 0, \ldots, n-1\}^k}$ and $(b_{ij})_{0\leq i < k, 0 \leq  j < n }$ of integers such that
$$ \varphi(a_\Delta, b_{0,j_0}, \ldots, b_{k-1,j_{k-1}} ) \ \text{ if and only if }\ (j_0, \ldots, j_{k-1}) \in \Delta. $$
Let $f: \sP(\{0,\ldots, n-1\}^k) \to \{0,\ldots, 2^{(n^k)}-1\}$ be an arbitrary bijection. Let $g$ be the bijection from $\{ 0, \ldots, n-1\}^k$ to $\{ 0, \ldots, n^k-1\}$ such that if $b$ and $b'$ are in $\{ 0, \ldots, n-1\}^k $ and $b<_{\text{lex}}b'$, then $g(b)<g(b')$. More explicitly, we have
$$g(j_0, \ldots, j_{k-1}) =j_0n^{k-1} + j_1n^{k-2} + \cdots + j_{k-1} \text{ for } (j_0, \ldots, j_{k-1}) \in \{ 0, \ldots, n-1\}^k. $$
It follows from the preceding paragraph that we can find an arithmetic progression $(c_i)_{ i \in  \{0, \ldots, n^k 2^{(n^k)} -1 \} }$ such that for all $\Delta \subseteq \{0, \ldots, n-1\}^k$ and $(j_0, \ldots, j_{k-1})$ in $\{ 0, \ldots, n-1\}^k$, we have that
$$  c_{f(\Delta)n^k+ g(j_0, \ldots, j_{k-1})} \in \SF\  \text{ if and only if } \ (j_0, \ldots, j_{k-1}) \in \Delta. $$
Suppose $d = c_1-c_0$.  Set $b_{ij} =  djn^{k-i-1} $ for $i \in \{0, \ldots, k-1\}$ and $j \in \{0, \ldots, n-1\}$, and set $a_{\Delta} = c_{f(\Delta)n^k}$ for $\Delta \subseteq \{ 0, \ldots, n-1\}^k$. We have
$$ c_{f(\Delta)n^k+ g(j_0, \ldots, j_{k-1})} = c_{f(\Delta)n^k} + d g(j_0, \ldots, j_{k-1}) = a_\Delta + b_{0,j_0}+ \cdots+ b_{k-1,j_{k-1}}.   $$
The conclusion thus follows.
\end{proof}

\begin{lem} \label{localstability}
Every $p$-condition $\theta_p(x,y)$ is stable in  $\TSFQ$.
\end{lem}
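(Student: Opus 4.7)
The plan is to reduce to a single atomic $p$-condition (since the class of stable formulas is closed under Boolean combinations) and then rule out the order property by a direct indiscernibility argument. Any $L_1^*$-term has the shape $kx+u(z)+c$ with $k,c\in\zz$ and $u$ a $\zz$-linear form in $z$, so it suffices to show that each atomic formula $\psi(x,z)\equiv kx+u(z)+c\in U_{p,l}$ is stable in $T_{1,\qq}$.

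Toward a contradiction, suppose $\psi$ is unstable. Working in the monster model $(\gg;\sU^\gg,\sP^\gg)$ and extracting an indiscernible sequence of pairs in the standard way (Ramsey in the saturated model), we obtain $(a_i,c_i)_{i<\omega}$ indiscernible over $\emptyset$ with $\gg\models\psi(a_i,c_j)$ if and only if $i<j$. Specializing $i=0$ yields $ka_0+u(c_j)+c\in U^\gg_{p,l}$ for every $j\geq 1$, so all the elements $u(c_1),u(c_2),\ldots$ lie in the single coset $-ka_0-c+U^\gg_{p,l}$; in particular $u(c_1)-u(c_2)\in U^\gg_{p,l}$.

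Now the subsequence $(c_i)_{i<\omega}$ inherits indiscernibility over $\emptyset$ from the pair sequence, so the increasing pairs $(c_0,c_1)$ and $(c_1,c_2)$ have the same type, and the quantifier-free $L_1^*$-formula $u(z_1)-u(z_2)\in U_{p,l}$ must therefore also hold on $(c_0,c_1)$. Combining $u(c_0)-u(c_1)\in U^\gg_{p,l}$ with $u(c_1)\in -ka_0-c+U^\gg_{p,l}$ forces $ka_0+u(c_0)+c\in U^\gg_{p,l}$, i.e., $\psi(a_0,c_0)$ holds, contradicting the order-property requirement that $\psi(a_0,c_0)$ fail since $0\not<0$. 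The only real point of the argument is to propagate the coset information from $(c_1,c_2)$ back to $(c_0,c_1)$ via indiscernibility of the $c$-subsequence together with the fact that $U^\gg_{p,l}$ is a subgroup; no appeal to the genericity of $(\gg;\sU^\gg,\sP^\gg)$ is required, and exactly the same argument shows that every $p$-condition is stable in $T_{1,\zz}$ as well.
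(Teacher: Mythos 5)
Your proof is correct and uses essentially the same idea as the paper: after reducing via Boolean combinations to a single atomic $p$-condition $kx+t(z)\in U_{p,l}$ with $k\neq 0$, one exploits the fact that $U^\gg_{p,l}$ is a subgroup, so the fibers $\theta_p(\gg,b)$ are cosets (hence pairwise equal or disjoint), which immediately rules out the order property. The paper compresses this into a one-line observation (``the sets defined by $\theta_p(x,b)$ and $\theta_p(x,b')$ are either the same or disjoint, so $\theta_p$ has no order property''), while you unpack the same coset fact explicitly through the indiscernible-sequence computation; both are correct.
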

\begin{proof}
Suppose $\theta_p(x,y)$ is as in the statement of the lemma. It is clear that if $\theta_p(x,y)$ does not contain the variable $x$, then it is stable. As stability is preserved  under taking boolean combinations, we can reduce to the case where $\theta_p(x,y)$ is $kx+t(y) \in U_{p,l}$ with $k\neq 0$. We note that for any $b$ and $b'$ in $\gg^{|y|}$, the sets defined by $\theta_p(x, b)$  and $\theta_p(x, b')$ are either the same or disjoint. It follows easily that $\theta_p(x, y)$ does not have the order property; in other words, $\theta_p(x,y)$ is stable. Alternatively, the desired conclusion also follows from the fact that  $( \qq; \sU^\qq)$ is an abelian structure and hence stable; see \cite[p. 49]{Wagner} for the relevant definition and result.
\end{proof}

\begin{proof}[Proof of Theorem \ref{structure3}, part 2]
We first show that $\text{Th}(\qq; \SQ)$ is simple. By the fact that $(\qq; \SQ)$ has the same definable sets as $(\qq; \sU^\qq, \sP^\qq)$ and
Corollary \ref{Completeness}, we can replace $\text{Th}(\qq; \SQ)$ with $\TSFQ$. Towards a contradiction, suppose that the latter is not simple. We obtain a formula  $\varphi(x,y)$ witnessing the tree property of $\TSFQ$; see \cite[pp. 24-25]{Kim} for the definition and proof that this is one of the equivalent characterizations of simplicity.
We can arrange that $\varphi(x,y)$ is quantifier-free by  Theorem \ref{QE}. Recall that disjunction preserves simplicity of formulas; this can be shown directly as an exercise or can be seen immediately from the equivalence between (1) and (3) in  \cite[Lemma~2.4.1]{Kim}. Hence using Lemma \ref{SimplifytoG}, we can arrange that $\varphi(x,y)$ is of the form $$  \rho(y) \wedge \varepsilon(x, y)  \wedge \psi(x, t(y), t'(y))  $$
where  $\rho(y)$ is a quantifier-free $L^*_{\mathrm{u}}$-formula,   $\varepsilon(x, y)$ is an equational condition, $t(y)$ and $t'(y)$ are tuples of $L^*_{\mathrm{u}}$-terms with lengths $n$ and $n'$ respectively, and $\psi(x, z, z')$ is a special formula. Let $(\gg; \sU^\gg, \sP^\gg) \models \TSFQ$. Then there is $b \in \gg^k$ with $k=|y|$, an uncountable  cardinal $\kappa$, and a tree $(\sigma_s)_{ s \in \omega^{<\kappa} }$  of $L^*_{\mathrm{u}}$-automorphisms of $(\gg; \sU^\gg, \sP^\gg)$ with the following properties:
\begin{enumerate}
\item for all $s \in \omega^{< \kappa}$, $\{ \varphi(x, \sigma_{s\frown (i)}(b)) : i \in \omega   \}$ is inconsistent;
\item for all $\hat{s} \in \omega^\kappa$, $\{ \varphi(x, \sigma_{\hat{s} \res \alpha}(b)): \alpha <\kappa\}$ is consistent;
\item for every $\alpha<\kappa$ and $s, s' \in \omega^\alpha $, $ \text{tp}\big( (\sigma_{s\frown (i)}(b))_i\big)=\text{tp}\big( (\sigma_{s'\frown (i)}(b))_i\big)$.
\end{enumerate}
More precisely, we can get $b$, $\kappa$, and  $(\sigma_t)_{ t \in \omega^{<\kappa} }$ satisfying (1) and (2) from the fact that $\varphi(x,y)$ witnesses the tree property of $\TSFQ$, a standard Ramsey arguments, and the monstrosity of $(\gg; \sU^\gg, \sP^\gg)$.  We can then arrange that (3) also holds using results in \cite{Kimkimscow}; a direct argument is also straightforward.

We deduce the desired contradiction by showing that there is $ s \in \omega^{<\kappa}$ such that   $\{   \varphi(x, \sigma_{s\frown (i)}(b)) : i \in \omega\}$ is consistent. 
From  (1-3), we get for all $s \in \omega^{< \kappa}$ that  $\rho(\sigma_{s}(b) )$ holds  and $\varepsilon(x, \sigma_{s}(b))$ defines a cofinite set. By montrosity of $\gg$, it suffices to find $s \in  \omega^{<\kappa}$ such that  
  any finite conjunction of  $\{ \psi\big(x, t(\sigma_{s\frown (i)}(b)),t'(\sigma_{s\frown (i)}(b))\big) : i \in \omega\}$ defines an infinite set in $\gg$. For $s \in \omega^{<\kappa}$ and a finite $\Delta \subseteq \omega$, set 
  $$ \theta_{s, \Delta}(x) := \bigwedge_{i \in \Delta}  \psi\big(x, t(\sigma_{s\frown (i)}(b)),t'(\sigma_{s\frown (i)}(b))\big).  $$
 As $\kappa$ is uncountable, to ensure the desired $s\in \omega^{<\kappa}$ exists, it suffices to show for fixed $\Delta$  that for all but countably many $\alpha < \kappa$ and all $s \in \omega^\alpha$, the formula $\theta_{s, \Delta}(x)$ defines an infinite set in $\gg$. 
 
 Note that   $\theta_{s, \Delta}(x)$ is a conjunction of $\gg$-systems given by the same special formula, so $\theta_{s, \Delta}(x)$ is also a $\gg$-system.  By the genericity of $\TSFQ$ established in Theorem \ref{genericity}, we need to check that for all but countably many $\alpha < \kappa$ and  all $s \in \omega^\alpha$, the $\gg$-system  $\theta_{s, \Delta}(x)$ is nontrivial and locally satisfiable. Indeed, this implies that 
 By (2), $\varphi(x, b)$ is consistent, and so is  $\psi(x, t(b),t'(b))$. This implies in particular that $t(b)$ and $t'(b)$ have no common components.  It then follows from (3) that for $s \in \omega^{< \kappa}$ and $i, j \in \omega$, 
  $$t(\sigma_{s\frown (i)}(b)) \text{ and } t'(\sigma_{s\frown (j)} (b)) \text{  have no common elements }.$$ 
  Hence, $\theta_{s, \Delta}(x)$ is nontrivial for all $s \in \omega^{< \kappa}$. Let $\psi_p(x,z,z')$ be the associated $p$-condition of $\psi(x,z,z')$. We then get from (2) that $\{ \psi_p(x, t(\sigma_{\hat{s} \res \alpha}(b)), t'(\sigma_{\hat{s} \res \alpha}(b))): \alpha <\kappa\}$ is consistent  for all $\hat{s} \in \omega^\kappa$. By Lemma \ref{localstability}, the formula $\psi_p(x, t(y),t'(y)  )$ is stable and hence does not witness the tree property.  It follows that for all but finitely many $\alpha< \kappa$ and all $s \in \omega^\alpha$, the set 
  $$\{ \psi_p\big(x, t(\sigma_{s\frown (i)} (b)), t'(\sigma_{s\frown (i)} (b))\big) : i \in \omega\} \text{ is consistent}.$$ For such $s$, we have that $\theta_{s, \Delta}(x)$ is $p$-satisfiable. So for all but countably many $\alpha< \kappa$ and all $s \in \omega^\alpha$, $\theta_{s, \Delta}(x)$  is locally satisfiable which completes the proof that $\text{Th}(\qq; \SQ)$  is simple.

We next prove that $\text{Th}( \qq; \SQ)$ is not strong which implies that it is not supersimple;  for the definition of strength and the relation to supersimplicity see \cite{Adler}. Again, we can replace $\text{Th}( \qq; \SQ)$ by $\TSFQ$ using Proposition \ref{notintroducingnewdefinableset} and Corollary \ref{Completeness}.  For each $p$, let $\varphi_p(x,y)$ with $|y|=1$ be the formula $x-y \in U_{p,0}$.   
For all $p$ and $i$, set $b_{p,i} = p^{-i}$. We will show that $\big( \varphi_p(x,y),  ( b_{p,i})_{i \in \nn}   )     \big)$ forms an inp-pattern of infinite depth in $(\qq; \sU^\qq, \sP^\qq)$.  For distinct $i$ and $j$ in $\nn$, we have that $p^{-i} - p^{-j} \notin U^\qq_{p,0}$ which implies that $\varphi_p(x, b_{p,i}) \wedge \varphi_p(x, b_{p,j}) $ is inconsistent. On the other hand, if $S$ is a finite set of primes, and $f: S \to \nn$ is an arbitrary function, then for $a=\Sigma_{p\in S}b_{p,f(p)}$ we have that $(\qq; \sU^\qq, \sP^\qq)\models \bigwedge_{p \in S} \varphi_p(a, b_{p,f(p)})$. The desired conclusion follows.

Finally, we note that $(\zz; \sU^\zz, \sP^\zz)$ is a substructure of  $(\qq; \sU^\qq, \sP^\qq)$, the former theory admits quantifier elimination and has $\text{IP}_k$ for all $k>0$.  Therefore, the latter also has $\text{IP}_k$ for all $k>0$. In fact, the construction in part 2 of the proof of Theorem \ref{structure1} carries through.
\end{proof}

\begin{lem}\label{NIPoforderformula}
Any order-condition has $\mathrm{NIP}$ in $\TOSFQ$.
\end{lem}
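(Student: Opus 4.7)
The plan is to observe that order-conditions live entirely in a reduct of $L_2^*$ that is already well understood, namely the ordered abelian group structure, and to invoke o-minimality. Let $\chi(z) \in L_2^*(z)$ be an order-condition. By definition, $\chi$ is a boolean combination of formulas of the form $t(z) < 0$ where $t$ is an $L_2^*$-term, that is, a $\zz$-linear combination of the components of $z$ and the constant $1$. No atomic subformula of $\chi$ mentions any of the predicate symbols from $\sU$ or $\sP$, so $\chi$ lies in the reduct to $L_0 \cup \{<\}$.

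Consider then the reduct $(\gg; +, -, 0, 1, <)$ of $(\gg; <, \sU^\gg, \sP^\gg)$. By clause (1) of the definition of $T_{2, \qq}^{-}$ (which is part of $T_{2, \qq}$), this reduct is elementarily equivalent to $(\qq; +, -, 0, 1, <)$, i.e.\ a nontrivial divisible ordered abelian group with a distinguished element. The theory of nontrivial divisible ordered abelian groups is well known to be o-minimal, and naming a single parameter preserves o-minimality; in particular, every formula in this reduct has NIP.

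Finally, NIP of a formula $\chi(x, y)$ depends only on the family of parameter-definable sets $\{ \chi(\gg, b) : b \in \gg^{|y|} \}$, and this family is literally the same whether $\chi$ is interpreted in the reduct or in the full structure, because $\chi$ contains no symbols outside the reduct. Hence NIP of $\chi$ in $(\gg; +, -, 0, 1, <)$ lifts to NIP of $\chi$ in $(\gg; <, \sU^\gg, \sP^\gg)$, for any partition $z = (x, y)$ of the variables. This gives the lemma.

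I do not foresee any substantial obstacle: the only non-bookkeeping ingredient is the o-minimality of divisible ordered abelian groups, which is standard, and the remaining content is simply noting that an order-condition uses no new symbols.
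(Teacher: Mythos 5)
Your proof is correct and follows essentially the same route as the paper: observe that an order-condition lives entirely in the reduct to the language of ordered groups, note that this reduct is a divisible ordered abelian group and hence NIP (you go via o-minimality, the paper cites NIP directly), and conclude that NIP transfers since the definable family is unchanged.
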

\begin{proof}
The statement immediately follows from the fact that every order condition is a formula in the language of ordered groups and the fact that the reduct of any model of  $\TOSFQ$ to this language is an ordered abelian group, which has $\text{NIP}$; see for example \cite{Gurevich}.
\end{proof}

\begin{proof}[Proof of Theorem \ref{structure4}, part 2]
In the proof of part 2 of Theorem \ref{structure3}, we have shown that $\text{Th}(\qq; \SQ)$ is not strong and is $k$-independent for all $k>0$, so the corresponding conclusions for $\text{Th}(\qq; <, \SQ)$ also follow. 
It remains to show that $\text{Th}(\qq;<, \SQ)$ has $\text{NTP}_2$. The proof is essentially the same as the proof that $\text{Th}(\qq; \SQ)$ is simple, but with extra complications coming from the ordering.
By Proposition \ref{notintroducingnewdefinableset} and  Corollary \ref{Completeness}, we can replace $\text{Th}(\qq;<, \SQ)$ with $\OSFQ^*$.  Towards a contradiction, assume that there is a formula $\varphi(x,y)$ witnessing $\text{TP}_2$ (see \cite[pp. 700-701]{Chernikov}).
We can arrange that $\varphi(x,y)$ is quantifier-free by  Theorem \ref{QE}. Disjunctions of formulas with $\text{NTP}_2$ again have $\text{NTP}_2$\cite[p. 701]{Chernikov},  so using Lemma \ref{SimplifytoG2} we can arrange that $\varphi(x,y)$ is of the form
$$  \rho(y) 
\wedge \lambda(x,y) \wedge \psi(x, t(y), t'(y))  $$
where 
$\rho(y)$ is a quantifier-free $L^*_{\mathrm{ou}}$-formula$, \lambda(x, y)$ an order condition, $\psi(x, z, z')$ a special formula, and $t(y)$ and $t'(y)$ are tuples of $L^*_{\mathrm{ou}}$-terms with length $n$ and $n'$ respectively.
Then there is $b \in \gg^{k}$ with $k=|y|$ and an array $(\sigma_{ij})_{ i \in \omega, j \in \omega}$  of $L_{\mathrm{ou}}^*$-automorphisms of $(\gg; <, \sU^\gg, \sP^\gg)$ with the following properties:
\begin{enumerate}
\item for all $i \in \omega$, $\{ \varphi(x, \sigma_{ij}(b)) : j \in \omega   \}$ is inconsistent;
\item for all $f: \omega \to \omega$, $\{ \varphi(x, \sigma_{if(i)}(b)): i \in \omega\}$ is consistent;
\item for all $i \in \omega$,  $(\sigma_{ij}(b))_{j \in \omega}$ is indiscernible over  $\{ \sigma_{i'j}(b) : i' \in \omega, i'\neq i, j \in \omega  \}$;
\item the sequence of ``rows'' $( (\sigma_{ij}(b))_{j \in \omega})_{i \in \omega}$ is indiscernible.
\end{enumerate}
We could get $b$, $\omega$, and $(\sigma_{ij})_{ i \in \omega, j \in \omega}$ as above  from the definition of $\text{NTP}_2$, Ramsey arguments, and the monstrosity of $(\gg; \sU^\gg, \sP^\gg)$; see also  \cite[p. 697]{Chernikov} for the type of argument we need to get  (3).

We deduce that the  set $\{ \varphi(x, \sigma_{ij}b) : j \in \omega   \}$ is consistent for all $i \in \omega$, which is the desired contradiction. We get from (2) that $\rho(\sigma_{ij}b)$ holds for all $i \in \omega$ and $j \in \omega$. 
Hence, it suffices to show for all $i \in \omega$  that $$   \{   \lambda(x,\sigma_{ij}b) \wedge  \psi(x, t( \sigma_{ij}b), t'(\sigma_{ij} b)) : j \in \omega\}  \text{ is consistent}.$$ The order condition $\lambda(x, y)$ has $\text{NIP}$  by Lemma \ref{NIPoforderformula}, and so it has  $\text{NTP}_2$. Using conditions (2-4), we get that 
$$\{ \lambda(x, \sigma_{ij}(b)) : j \in \omega\} \text{ is consistent} \text{ for all } i\in \omega.$$ 
Hence, any finite conjunction from $\{\lambda(x, \sigma_{ij}(b)): j \in \omega\}$ contains an open interval for all $i \in \omega$. For $i \in \omega$ and a finite $\Delta \subseteq \omega$, set 
  $$ \theta_{i, \Delta}(x) := \bigwedge_{j \in \Delta}  \psi\big(x, t(\sigma_{ij}(b)),t'(\sigma_{ij}(b))\big).  $$
It suffices to show that $\theta_{i, \Delta}(x)$ defines a non-empty set in every non-empty $\gg$-interval.

We have that $\theta_{i, \Delta}(x)$ is a conjunction of $\gg$-system given by the same special formula, and so is again a $\gg$-system.
 By the genericity of $\TOSFQ$,  the problem reduces to showing $\theta_{i, \Delta}(x)$ is nontrivial and locally satisfiable.   
 By (2), $\varphi(x, b)$ is consistent, and so is  $\psi(x, t(b),t'(b))$. This implies in particular that $t(b)$ and $t'(b)$ have no common components.  It then follows from (3) that for $i \in \omega$ and distinct $j, j' \in \omega$, 
  $$t(\sigma_{ij}(b)) \text{ and } t'(\sigma_{ij'} (b)) \text{  have no common elements}.$$ Hence, $\theta_{i, \Delta}(x)$ is nontrivial for all $i \in \omega$. Let $\psi_p(x,z,z')$ be the associated $p$-condition of $\psi(x,z,z')$. We then get from (2) that $\{ \psi_p(x, \sigma_{if(i)}(b)): i \in \omega\}$ is consistent  for all $f: \omega \to \omega$. By Lemma \ref{localstability}, the formula $\psi_p(x, t(y),t'(y)  )$ is stable and hence has $\text{NTP}_2$.  It follows that for all but finitely many $i \in \omega$ the set 
  $$\{ \psi_p\big(x, t(\sigma_{ij} (b)), t'(\sigma_{ij} (b))\big) : j \in \omega\} \text{ is consistent}.$$ Combining with (4), we get that $\theta_{i, \Delta}(x)$ is  $p$-satisfiable for all $p$ which completes the proof.
  \end{proof}

\begin{cor}
The set $\zz$ is not definable in $(\qq; <, \SQ)$.
\end{cor}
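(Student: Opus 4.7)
The plan is to derive a contradiction with Theorem \ref{structure4} via the bi-interpretability result of Theorem \ref{structure2}. Concretely, suppose $\zz$ is definable in $(\qq; <, \SQ)$. Then since the additive operation and the ordering on $\zz$ are just the restrictions to $\zz$ of the corresponding operations on $\qq$, and since the predicate $\SF$ on $\zz$ is nothing but $\SQ \cap \zz$, the whole $L_2$-structure $(\zz; <, \SF)$ is (interpretable, and in fact) definable in $(\qq; <, \SQ)$.

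Next I would invoke Theorem \ref{structure2}: the $L_2$-theory of $(\zz; <, \SF)$ is bi-interpretable with $\mathrm{Th}(\nn; +, \times, <, 0, 1)$. Composing interpretations, Peano-style arithmetic would then be interpretable in $(\qq; <, \SQ)$. This is where the contradiction arises, because $\mathrm{Th}(\nn; +, \times, <, 0, 1)$ is well-known to be wild — in particular it has $\mathrm{TP}_2$ — whereas Theorem \ref{structure4} asserts that $\mathrm{Th}(\qq; <, \SQ)$ has $\mathrm{NTP}_2$, and $\mathrm{NTP}_2$ is preserved under interpretation.

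The only point requiring any care is the preservation statement: one must observe that if $T$ has $\mathrm{NTP}_2$ and $T'$ is interpretable in $T$, then $T'$ also has $\mathrm{NTP}_2$. This is standard since any $\mathrm{TP}_2$-witness for $T'$ pulls back along the interpretation to a $\mathrm{TP}_2$-witness for $T$. Once this is in hand, the corollary is immediate. I do not anticipate any serious obstacle; the whole argument is a short citation-style proof that bundles Theorems \ref{structure2} and \ref{structure4} together with the well-known wildness of arithmetic.
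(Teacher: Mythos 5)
Your proof is correct and follows essentially the same route as the paper: assume $\zz$ is definable, observe that $(\zz;<,\SF)$ is then definable and hence $(\nn;+,\times,<,0,1)$ is interpretable in $(\qq;<,\SQ)$ via Theorem \ref{structure2}, and contradict Theorem \ref{structure4} using that $\mathrm{NTP}_2$ is preserved under interpretation while arithmetic has $\mathrm{TP}_2$. No issues.
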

\begin{proof}
Towards a contradiction, suppose $\zz$ is definable in $(\qq; <, \SQ)$. Then by Theorem \ref{structure2},  $(\nn; +, \times, <, 0, 1)$ is interpretable in $(\qq; <, \SQ)$. It then follows from Theorem \ref{structure4} that  $(\nn; +, \times, <, 0, 1)$ has  $\text{NTP}_2$, but this is well-known to be false.
\end{proof}

\section{Further questions}
\noindent There are several further questions we can ask about $(\zz; \SF)$, $(\qq; \SQ)$, and  $(\qq; <, \SQ)$. We would like to better understand dividing and forking inside these structures. Ideally, they coincide and have appropriate ``local to global'' behaviors. It would also be nice to understand imaginaries and definable groups in these structures.

One would like to have similar results for ``sufficiently random'' subsets of $\zz$ other than  $\text{Pr}$ and $\SF$. Another interesting candidate of such a subset is $\{\pm pq : p, q \text{ are primes}\}$. Most likely, it is not possible to prove the analogous results without assuming any number-theoretic conjecture. In a rather different direction, is there any sense in which we can say that most subsets of $\zz$ are ``sufficiently random'' and yield results similar to ours?

In \cite{BJW}, it is shown under the assumption of Dickson's Conjecture, that the monadic second order theory of $( \nn; S, \text{Pr})$ is decidable where $S$ is the successor function. We hope the analogous result for $( \nn; S, \SF)$ can be shown without assuming any conjecture. On another note, suppose the field  $\bar{\qq}$ is an algebraic closure of the field $\qq$, $v$ range over the non-archimedian valuations of $\bar{\qq}$, and $$ \text{Sqf}^{ \bar{\qq} } = \{ a \in \bar{\qq} :  v(a) <2 \text{ for all } v \}.$$
Does $( \bar{\qq}; \text{Sqf}^{ \bar{\qq} })$ have $\text{NTP}_2$? Finally, if $\zz^\times$ is the multiplicative monoid of integers, can anything be said about $(\zz^\times; \SF)$? 

\section*{Acknowledgements}
\noindent The authors would like to thank  Utkarsh Agrawal, William Balderrama, Alexander Dunn,  Lou van den Dries, Allen Gehret, Itay Kaplan, and Erik Walsberg  for dicussions and comments  at various stages of the project. We would also like to thank the referee for the detailed and insightful reading which allowed us to simplify several arguments. Of course, any remaining errors are our responsibility.
\bibliographystyle{amsalpha}
\bibliography{the}

\end{document}